\newtheorem{theorem}{Theorem}[section]
\newtheorem{lemma}[theorem]{Lemma}
\newtheorem{question}[theorem]{Question}
\newtheorem{thmx}{Theorem}
\newtheorem{corx}[thmx]{Corollary}
\newtheorem{problem}[theorem]{Problem}	
\theoremstyle{definition}
\newtheorem{definition}[theorem]{Definition}
\theoremstyle{remark}
\newtheorem{example}[theorem]{Example}				
\newtheorem{remark}[theorem]{Remark}
\newcommand{\id}{\operatorname{id}}
\newcommand{\Aut}{\operatorname{Aut}}
\newcommand{\Mor}{\operatorname{Mor}}
\newcommand{\Ob}{\operatorname{Ob}}
\newcommand{\MS}{\operatorname{MaxSub_{loc}}}
\DeclareMathOperator\CR{CR}
\newcommand{\Z}{{\mathbb Z}}
\newcommand{\Pb}{{\mathbb P}}
\newcommand{\Wb}{\mathbb{W}}
\renewcommand{\Mc}{{\mathcal M}}
\newcommand{\Nc}{{\mathcal N}}
\newcommand{\Mb}{{\mathbb M}}
\newcommand{\N}{{\mathbb N}}
\newcommand{\C}{\mathcal{C}}
\newcommand{\X}{{\mathcal X}}
\newcommand{\Graphs}{{{\mathcal G}raphs}}
\newcommand{\Groups}{{{\mathcal G}roups}}
\newcommand{\Part}{{{\mathcal P}art}}
\newcommand{\SSet}{{{\mathcal S}Sets}}
\newcommand{\Sym}{{{\mathcal S}ym}}
\newcommand{\InjPart}{{{\mathcal I}njPart}}
\newcommand{\DecGraphs}{{{\mathcal D}ecGraphs}}
\newcommand{\FinDecGraphs}{{{\mathcal F}inDecGraphs}}
\newcommand{\Db}{\mathbb{D}}
\newcommand{\Fc}{\mathcal{F}}
\newcommand{\Hc}{\mathcal{H}}
   \def\MR#1{}
\begin{document}

\title[Path partial groups]{Path partial groups}
\author{Antonio D{\'\i}az Ramos}
\address{Departamento de \'Algebra, Geometr{\'\i}a y Topolog{\'\i}a, Universidad de M{\'a}laga, 29071-M{\'a}laga, Spain}
\email{adiazramos@uma.es\\
ORCID: \href{https://orcid.org/0000-0002-1669-1374}{0000-0002-1669-1374}}
\author{R\'emi Molinier}
\address{Univ. Grenoble Alpes, CNRS, IF, 38000 Grenoble, France}
\email{remi.molinier@univ-grenoble-alpes.fr\\
ORCID: \href{https://orcid.org/0000-0002-3742-5307}{0000-0002-3742-5307}}
\author{Antonio Viruel}
\address{Departamento de \'Algebra, Geometr{\'\i}a y Topolog{\'\i}a, Universidad de M{\'a}laga, 29071-M{\'a}laga, Spain}
\email{viruel@uma.es\\
ORCID: \href{https://orcid.org/0000-0002-1605-5845}{0000-0002-1605-5845}}
\thanks{This work was partially supported by MCIN/AEI/10.13039/501100011033 [PID2020-118753GB-I00 to first and third authors, and PID2023-149804NB-I00 to third author].}


\subjclass[2020]{Primary: 20F29; Secondary: 20N99, 55U10, 55R35}
\keywords{Partial group, automorphism, graph}

\begin{abstract}
It is well known that not every finite group arises as the full automorphism group of some group. Here we show that the situation is dramatically different when considering the category  of partial groups, $\Part$, as defined by Chermak: given any group $H$ there exists infinitely many non isomorphic partial groups $\Mb$ such that $\Aut_\Part(\Mb)\cong H$. To prove this result, given any simple undirected graph $G$ we construct a partial group $\Pb(G)$, called the path partial group  associated  to $G$, such that $\Aut_\Part\big(\Pb(G)\big)\cong \Aut_\Graphs(G)$.
\end{abstract}

\maketitle

\section{Introduction}

From the seminal work of Galois to the discovery of sporadic simple groups, describing abstract groups as the full automorphisms group of a mathematical object can be considered the source of Group Theory, and it certainly sits in the core of Representation Theory. Within this framework, it is then natural to ask whether for a fixed category $\C$, and given an abstract group $H$, there exists an object $X$ in $\C$ such that $H\cong\Aut_\C(X)$. If any group (resp.\ finite group) $H$ can be so represented, the category $\C$ is said to be universal (resp.\ finitely universal) \cite[Section 4.1]{Babai}.

Some categories have already been shown to be either universal or finitely universal. For example, $\Graphs$, the category of simple undirected graphs, is known to be universal \cite{Fr39, Gr, Sa}, and this has been used to show that algebras over any field are finitely universal \cite{CLTV}. In addition, the homotopy category of rational elliptic spaces is universal \cite{CV}, and the category of compact hyperbolic $n$-manifolds and their isometries is finitely universal \cite{BeLo2005} for any $n\geq 2$. However, in general identifying universal categories is a hard task and, for instance, deciding whether Galois extensions over the field of rational numbers is finitely universal, the Inverse Problem of Galois Theory \cite{Serre_Galois}, is still an open question. A more comprehensive survey of this topic may be found in \cite{Babai} and \cite[Introduction]{jones}.

Ironically, the category $\Groups$ is not even finitely universal; it is well known that  a nontrivial cyclic group of odd order cannot arise as the full automorphism group of any group.  So it is natural to ask whether it is possible to ``sensibly enlarge" the category $\Groups$ to a universal category $\C$. More precisely

\begin{question}\label{core_question}
Does there exist a universal category $\C$ such that $\Groups$ is fully embedded in $\C$?
\end{question}

We tackle this question by considering $\Part$, the category of partial groups (see Section \ref{sec:partials} for definitions). Partial groups, defined by  Chermak \cite[Section 2]{Ch0}, generalise the concept of group and  are introduced as a  setting for the study of the $p$-local structure of finite groups within the framework of fusions systems as defined by Broto-Levi-Oliver \cite{BLO} (see also the monographies \cite{AKO} and \cite{Craven}). Another example of partial group is the set $M_{13}$, which appears naturally in the context of the Mathieu group $M_{12}$, see \cite{CEM2006} and \cite{MR3471269}. In addition, every group gives rise to a partial group in a natural way  \cite[Example 2.4.(1)]{Ch0} and so $\Groups$ is fully embedded in $\Part$. Therefore we answer Question \ref{core_question} in the positive by proving

\begin{thmx}[Theorem \ref{thm:theorem_A}]\label{thm:introduction_thmA}
The category $\Part$ is universal. Moreover, given an abstract group $H$ there exist infinitely many non isomorphic partial groups $\Mb$ such that $\Aut_\Part(\Mb)\cong H$.
\end{thmx}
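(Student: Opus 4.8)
The plan is to reduce Theorem A to the universality of the category $\Graphs$, transported into $\Part$ by the path partial group construction. The key external input is that $\Graphs$ is universal \cite{Fr39, Gr, Sa}: every group $H$ is isomorphic to $\Aut_\Graphs(G)$ for some simple undirected graph $G$. The key internal input is the assignment $G \mapsto \Pb(G)$ together with the isomorphism $\Aut_\Part(\Pb(G)) \cong \Aut_\Graphs(G)$.

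I would first upgrade the realization result to an \emph{infinite} family. It is standard that for every group $H$ there exist infinitely many pairwise non-isomorphic graphs $G_1, G_2, \dots$ with $\Aut_\Graphs(G_n) \cong H$ for all $n$. A clean way to produce them: fix one realization $G$ with $\Aut_\Graphs(G) \cong H$, choose pairwise non-isomorphic connected \emph{asymmetric} graphs $R_1, R_2, \dots$ (that is, with $\Aut_\Graphs(R_n) = 1$), none of them isomorphic to a connected component of $G$ --- such $R_n$ exist in all sufficiently large orders --- and set $G_n := G \sqcup R_n$. Since $R_n$ is rigid and shares no component with $G$, it can neither be permuted with the remaining components nor contribute any inner symmetry, whence $\Aut_\Graphs(G_n) \cong \Aut_\Graphs(G) \cong H$; and the $G_n$ are pairwise non-isomorphic because their distinguished extra components $R_n$ are. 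I would record this as a preliminary lemma.

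Applying the path partial group construction, set $\Mb_n := \Pb(G_n)$. The defining property then yields
\[
\Aut_\Part(\Mb_n) \;\cong\; \Aut_\Graphs(G_n) \;\cong\; H
\]
for every $n$, which already establishes that $\Part$ is universal. To conclude, I must check that the $\Mb_n$ are pairwise non-isomorphic as partial groups. The cleanest route is to use that $G \mapsto \Pb(G)$ reflects isomorphisms, so that $\Pb(G_n) \cong \Pb(G_m)$ would force $G_n \cong G_m$; the pairwise non-isomorphism of the $G_n$ then transfers verbatim to the $\Mb_n$. Absent such a reflection statement, the same conclusion follows from any invariant that is strictly monotone in the graph --- for instance the cardinality of the underlying set of $\Pb(G)$, built from the paths of $G$ --- since the $G_n$ have strictly increasing order.

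The substantive difficulty of the whole program is not this deduction but the construction of $\Pb(G)$ and the proof that $\Aut_\Part(\Pb(G)) \cong \Aut_\Graphs(G)$, which is the technical heart of the paper. Granting that, the only genuine work at the present stage is the refinement of graph-universality to infinitely many non-isomorphic realizations together with the verification that $\Pb$ keeps them apart, both of which are routine once the functorial properties of $\Pb$ are available.
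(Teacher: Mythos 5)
Your proposal is correct and follows essentially the same route as the paper: cite the universality of $\Graphs$ to get infinitely many non-isomorphic graphs realizing $H$, apply Theorem~\ref{thm:aut_of_partial_Z2} to transfer the automorphism group, and use that $\Pb$ reflects isomorphisms (via the full and faithful embedding of Lemma~\ref{lem:graphs_embeds}) to keep the resulting partial groups pairwise non-isomorphic. The only difference is that you spell out an explicit disjoint-union-with-rigid-graphs construction for the infinite family, whereas the paper simply cites \cite{Fr39, Gr, Sa} for that refinement.
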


The category of monoids, and thus $\Groups$, can be fully embedded into the category $\SSet$ of simplicial sets by sending a monoid $M$, eventually a group, to $\Wb(M)$ the graded set of all possible finite words and where faces and degeneracies are induced by the product of two consecutive elements (or dropping the first or last element) and injecting the identity somewhere respectively. In the same way, Broto and Gonz{\'a}lez \cite[Section 2]{alex-carles} highlighted that there is a fully faithful functor, which extends the latter, from the category of partial monoids (see Definition \ref{def:PG}), and thus $\Part$, to $\SSet$ which sends a partial monoid $\Mc$, eventually a partial group, to its domain $\Db\subseteq \Wb(\Mc)$, i.e. the graded sets of all words that can be multiplied. This gives another proof of the universality of simplicial sets which can already be deduced from the universality of the category of monoids (see for example \cite[Proposition 5]{monoids}) and the above functor. Actually the involution induced by the inverse in a partial group gives a more richer structure to its domain where even non-order preserving maps can act: it is a symmetric simplicial set \cite{Grandis}. This gives a fully faithful embedding of $\Part$ into the category $\Sym$ of symmetric simplicial sets \cite[Theorem A]{HL} and we can therefore deduce that  $\Sym$ is also universal.

\begin{corx}
The category $\Sym$ of symmetric simplicial set is universal. Moreover, given an abstract group $H$ there exist infinitely many non isomorphic symmetric simplicial sets $X$ such that $\Aut_\Sym(X)\cong H$.
\end{corx}

The proof of Theorem \ref{thm:introduction_thmA} is built upon a functor, defined in Section \ref{sec:graphs}, that maps every pair $(G,\Hc)$, for $G=(V,E)$ a simple undirected graph and $\Hc=\{H_v\}_{v\in V}$ a collection of non-trivial groups, to a partial group $\Mb(G,\Hc)$ such that the group of automorphisms $\Aut_\Part\big(\Mb(G,\Hc)\big)$ is closely related to the groups $\Aut_\Graphs(G)$ and $\prod_{v\in V} \Aut_\Groups(H_v)$  (see Theorem \ref{thm:aut_of_partial}). The algebraic structure of the partial group $\Mb(G,\Hc)$ is related to path concatenation within $G$. In fact, the special case with $\Hc$ consisting of copies of $\Z_2$, i.e., the cyclic group of order $2$, is the central character in this work: path partial groups.

\begin{definition}\label{def:pathpartialgroup}
Let $G$ be a simple undirected graph. The \emph{path partial group} associated to the graph $G$, denoted by  $\Pb(G)$, is
the partial group $\Mb(G,\{\Z_2\}_{v\in V}).$
\end{definition}

The algebraic structure of $\Pb(G)$ can be described in terms of the paths contained in the cliques of $G$ and path concatenation, together with the fact that vertices behave as inverses of themselves, see discussion after Lemma \ref{lem:graphs_embeds}. Its construction resembles the ideas underlying path algebras, i.e., the algebra associated to a quiver \cite[II.1.2]{assem_skowronski_simson_2006}. The following result is the key ingredient in the proof of Theorem \ref{thm:introduction_thmA}.

\begin{thmx}[Theorem \ref{thm:aut_of_partial_Z2}]\label{thm:introduction_thmC}
Let $G$ be a simple undirected graph. Then $$\Aut_\Part\big(\Pb(G)\big)\cong\Aut_\Graphs(G).$$
\end{thmx}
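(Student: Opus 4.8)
The plan is to obtain this statement as a direct specialization of the general result of Theorem~\ref{thm:aut_of_partial}, applied to the collection $\Hc=\{\Z_2\}_{v\in V}$, since by Definition~\ref{def:pathpartialgroup} one has $\Pb(G)=\Mb(G,\Hc)$ for precisely this choice. First I would record that the functorial construction $(G,\Hc)\mapsto\Mb(G,\Hc)$ assigns to each graph automorphism $\sigma\in\Aut_\Graphs(G)$ an automorphism of the partial group $\Pb(G)$ that permutes the underlying data according to $\sigma$; this provides a homomorphism $\Aut_\Graphs(G)\to\Aut_\Part\big(\Pb(G)\big)$, and the content of the theorem is that this homomorphism is an isomorphism.

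The arithmetic heart of the argument is the observation that $\Aut_\Groups(\Z_2)$ is trivial, as the group $\Z_2$ admits a unique automorphism, the identity. Consequently the factor $\prod_{v\in V}\Aut_\Groups(H_v)=\prod_{v\in V}\Aut_\Groups(\Z_2)$ occurring in Theorem~\ref{thm:aut_of_partial} collapses to the trivial group. Whatever precise relationship that theorem establishes between $\Aut_\Part\big(\Mb(G,\Hc)\big)$, the graph automorphisms $\Aut_\Graphs(G)$, and the product $\prod_{v\in V}\Aut_\Groups(H_v)$ of automorphisms of the vertex groups ---typically an extension whose ``vertex-decoration'' part is exactly this product--- the vanishing of the product removes every contribution coming from the $H_v$ and leaves only the graph-theoretic factor $\Aut_\Graphs(G)$.

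The step demanding genuine care, and the one I expect to be the main obstacle, is to verify that the \emph{entire} graph automorphism group is realized rather than merely a proper subgroup. In the general setting a partial-group automorphism can send a vertex $v$ to a vertex $w$ only when the decorating groups satisfy $H_v\cong H_w$, so \emph{a priori} only those graph automorphisms preserving the isomorphism type of the decorations lift to $\Aut_\Part\big(\Mb(G,\Hc)\big)$. This is exactly where the choice $H_v=\Z_2$ for every $v$ becomes essential: since all the decorating groups coincide, every graph automorphism is automatically type-preserving, and so no $\sigma\in\Aut_\Graphs(G)$ is obstructed. Together with injectivity of the comparison map ---two distinct graph automorphisms act differently on the vertex set and hence induce distinct automorphisms of $\Pb(G)$--- this upgrades the homomorphism of the first paragraph to the desired isomorphism $\Aut_\Part\big(\Pb(G)\big)\cong\Aut_\Graphs(G)$.
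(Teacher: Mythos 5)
Your proposal is correct and follows essentially the same route as the paper: specialize Theorem~\ref{thm:aut_of_partial} to $\Hc=\{\Z_2\}_{v\in V}$, use the triviality of $\Aut_\Groups(\Z_2)$ to get injectivity of the map to $\Aut_\Graphs(G)$, and use the fact that all decorating groups coincide to produce a section realizing every graph automorphism (this is exactly the paper's Remark~\ref{rmk:section} combined with Lemma~\ref{lem:graphs_embeds}). You correctly identified the genuine subtlety ---that the exact sequence of Theorem~\ref{thm:aut_of_partial} need not be short exact, so surjectivity onto $\Aut_\Graphs(G)$ requires the separate lifting argument--- which is precisely what the paper handles via the explicit section $\Phi$ with $\Psi\circ\Phi=\id$.
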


In general, the graph $G$ can be recovered up to isomorphism from the partial group $\Mb(G,\Hc)$ whenever the collection $\Hc$ consists of locally finite groups (see Theorem \ref{thm:maximalsub}). In particular, the path partial group associated to a graph $G$ determines the isomorphism type of $G$, that is, the path partial group is a classifying invariant for graphs. The situation for path algebras is similar: a quiver can be recovered up to isomorphism from its path algebra \cite[II.3.6]{assem_skowronski_simson_2006} and, in addition, some invariants of the quiver correspond to invariants of its associated path algebra \cite[II]{assem_skowronski_simson_2006}. Therefore, it makes sense to propose the following problem.

\begin{problem}\label{problem:proposed}
Describe graph theoretical invariants of a simple undirected graph in terms of algebraic invariants of its associated path partial group.
\end{problem}

Besides the aforementioned setup of quivers and their associated path algebras, the kind of study proposed in Problem \ref{problem:proposed} is well established in other fields as directed graphs and their associated Leavitt path algebras, see \cite{leavitt} or the Simplicity Theorem \cite[2.9.1]{leavitt} as a concrete instance of interaction between the graph side and its algebraic counterpart. In other areas as  finite simplicial graphs and their associated right-angled Artin groups, studying this type of relations is also an active line of work, see \cite[Problem 1.1]{ramon}. In that work, the authors add the graph theoretical property of $k$-colorability to the list of properties of simplical graphs that can read off from the algebraic properties of its associated right-angled Artin group. This list includes, among others, the graph theoretical properties of being a join or disconnected, and these properties correspond to the algebraic properties of decomposing as a non-trivial direct product or decomposing as a non-trivial free product respectively. See \cite[p.\ 2]{ramon} for more details.

\subsection*{Outline of the paper:} We start giving in Section \ref{sec:partials} basic notions about partial groups and some related constructions as colimit of partial groups. This preliminaries are the necessary background to construct the partial group $\Mb(G,\Hc)$ associated to a decorated simple undirected graph $(G,\Hc)$ in Section \ref{sec:graphs}. Then in Section \ref{section:embedding} we deepen into this construction and show how to embed the category of decorated graphs into the category of partial groups. In Section \ref{sec:path_partial}, we restrict the study to path partial groups and prove Theorems \ref{thm:introduction_thmA} and \ref{thm:introduction_thmC}. We also include in this section a digress on rigidity in the categories of groups and partial groups. Finally, in Section \ref{sec:maximalsub}, we begin the study of Problem \ref{problem:proposed}.

\subsection*{Acknowledgements:} The authors are thankful to  Ellen Henke for showing interest in these results and pointing out reference \cite{MR4548989}, and to Edoardo Salati for providing us with an earlier version of his work.


\section{Basics on partial groups}\label{sec:partials}

The notion of partial group is due to Chermak. Here we introduce the basic definitions and some useful properties that are needed in the following sections. More details on the subject can be found in \cite[Section 2]{Ch0} or in \cite[Section 1]{Ch1}.

For $X$ a set, $\Wb(X)$ will denote the free monoid on $X$ and for two words $u,v\in\Wb(X)$, $u\circ v$ will denote the concatenation of $u$ and $v$. We also identify $X$ as the subset of words of length 1 in $\Wb(X)$. Finally, given two sets $X$ and $Y$ and a map $\varphi\colon X\to Y$, we will denote by $\overline{\varphi}\colon \Wb(X)\to\Wb(Y)$ the map induced by $\varphi$ defined as follows: for $u=(x_1,x_2,\dots,x_n)\in\Wb(X)$, $\overline{\varphi}(u)=(\varphi(x_1),\varphi(x_2),\dots,\varphi(x_n))$.

\begin{definition}\label{def:PG}
Let $\Mc$ be a non-empty set and $\Db\subseteq \Wb(\Mc)$ be a subset such that,
\begin{enumerate}[label=(D\arabic*)]
\item\label{cond:D1} $\Mc\subseteq \Db$; and
\item\label{cond:D2} $u\circ v\in\Db\Rightarrow u,v\in\Db$ (in particular, $\emptyset\in\Db$).
\end{enumerate}
A mapping $\Pi:\Db\rightarrow\Mc$ is a \emph{product} if
\begin{enumerate}[label=(P\arabic*)]
\item\label{cond:P1} $\Pi$ restricts to the identity on $\Mc$; and
\item\label{cond:P2} if $u\circ v\circ w\in \Db$ then $u\circ \Pi(v)\circ w\in\Db$ and
\[\Pi(u\circ v\circ w)=\Pi\left(u\circ\Pi(v)\circ w\right).\]
\end{enumerate}
The \emph{unit} is then defined as $\Pi(\emptyset)$ and we will denote it by $1_\Mc$ or $1$ when there is no ambiguities.
A \emph{partial monoid} is a triple $(\Mc,\Db,\Pi)$ where $\Pi$ is a product defined on $\Db$ and $\Db$ is called the \emph{domain} of $(\Mc,\Db,\Pi)$.

An \emph{inversion} on $\Mc$ is an involutory bijection $x\mapsto x^{-1}$ on $\Mc$ together with the induced mapping  $u\mapsto u^{-1}$ on $\Wb(\Mc)$ defined by,
\[u=(x_1,x_2,\dots,x_n)\mapsto (x_n^{-1},x_{n-1}^{-1},\dots,x_1^{-1}).\]
A \emph{partial group} is a tuple $\Mb=\left(\Mc,\Db,\Pi,(-)^{-1}\right)$ where $\left(\Mc,\Db,\Pi\right)$ is a partial monoid and $(-)^{-1}$ is an inversion on $\Mc$ satisfying
\begin{enumerate}[resume,label=(P\arabic*)]
\item\label{cond:P3} if $u\in\Db$ then $u^{-1}\circ u\in\Db$ and $\Pi(u^{-1}\circ u)=1$.
\end{enumerate}

\end{definition}

A word $w\in\Db$ will be called \emph{non-degenerate} if there is no $1$ in $w$. Then, $\Db$ is totally determined by its non-degenerate words.

\begin{remark}
The notion of locality \cite[Definition 2.9]{Ch0}, a particular type of partial group, plays a central role in Chermak's work and the study of $p$-local structure of finite groups. One can check that the partial groups considered in this work (Definitions \ref{def:M(C)} and \ref{def:DM}) are not localities in general as they already fail to be \emph{objective} partial groups (\cite[Definition 2.6]{Ch0}).
\end{remark}

\begin{example}\label{ex:grp}
 Let $\Mb=\left(\Mc,\Db,\Pi,(-)^{-1}\right)$ be a partial group.  If $\Db=\Wb(\Mc)$ then $\Mb$ is a group via the binary operation $(x,y)\in\Mc^2\mapsto \Pi(x,y)\in\Mc$. Moreover, if $H$ is a group and $\Pi\colon \Wb(H)\to H$ is the multivariate product induced by the binary product on $H$, $(H,\Wb(H),\Pi,(-)^{-1})$ is a partial group.
\end{example}

\begin{example}\label{ex:freepargp}
Let $\Fc(a)=\{1,a,a^{-1}\}$ and set the non-degenerate words of $\Db_a$ to be all possible words alternating $a$'s and $a^{-1}$'s. In other words, the non-degenerate words of $\Db_a$ are all the different finite sub-words of the infinite word $(a,a^{-1},a,a^{-1},a,a^{-1},\dots)$. The inversion is understood  and, for any word $u\in\Db\left(\Fc(a)\right)$,
\[
\Pi(u)=
\begin{cases}
1 &\text{if the number of $a$'s equal the number of $a^{-1}$'s},\\
a &\text{if the number of $a$'s exceed the number of $a^{-1}$'s (necessarily by 1)},\\
a^{-1} &\text{if the number of $a^{-1}$'s exceed the number of $a$'s (necessarily by 1)}.
\end{cases}
	\]
One can then check that $\left(\Fc(a),\Db_a,\Pi_a,(-)^{-1}\right)$ defines a partial group.
\end{example}

This last example is actually the \emph{free partial group on the set $\{a\}$} as detailed in \cite[Lemma 1.12]{Ch1}.

\begin{definition}\label{def:partial_subgroup_and_subgroup}
Let $\Mb=\left(\Mc,\Db,\Pi,(-)^{-1}\right)$ be a partial group.
A \emph{partial subgroup} of $\Mb$ is a subset $\Nc$ of $\Mc$ closed under inversion and such that $\Pi\left(\Db\cap\Wb(\Nc)\right)\subseteq \Nc$. Its partial group structure is given by $\left(\Nc, \Db\cap\Wb(\Nc),\Pi,(-)^{-1}\right)$. If $\Db\cap\Wb(\Nc)=\Wb(\Nc)$ then we say that $\Nc$ is a \emph{subgroup} of $\Mb$.
\end{definition}

\begin{definition}
Let $\Mb_1=\left(\Mc_1,\Db_1,\Pi_1,(-)^{-1}\right)$ and $\Mb_2=\left(\Mc_2,\Db_2,\Pi_2,(-)^{-1}\right)$ be partial groups.
A map of sets $\varphi\colon \Mc_1\to \Mc_2$ is a \emph{homomorphism of partial groups} from $\Mb_1$ to $\Mb_2$ if
\begin{enumerate}[label=(H\arabic*)]
\item $\overline{\varphi}\left(\Db_1\right)\subseteq \Db_2$;
\item for any $u\in\Db_1$, $\Pi_2\left(\overline{\varphi}(u)\right)=\varphi\left(\Pi_1(u)\right)$.\label{equ:varphioverlinevarphi_commute}
\end{enumerate}
The \emph{kernel} of $\varphi$ is the partial subgroup of $\Mc_1$ given by
\[
\ker(\varphi)=\{x\in \Mc_1\text{ $|$ }\varphi(x)=1_{\Mc_2}\}.
\]
A homomorphism $\varphi\colon \Mc_1\to \Mc_2$ is an \emph{isomorphism} of partial groups if the map $\varphi$ is bijective and $\varphi^{-1}$ is also a morphism of partial groups. Finally, an \emph{automorphism} of the partial group $\Mc_1$ is an \emph{isomorphism} $\varphi\colon \Mc_1\to\Mc_1$.
\end{definition}

\begin{lemma}[{\cite[Lemma 1.15]{Ch1}}]\label{lem:subgroup_to_subgroup}
Consider  partial groups $\Mb_1=\left(\Mc_1,\Db_1,\Pi_1,(-)^{-1}\right)$ and $\Mb_2=\left(\Mc_2,\Db_2,\Pi_2,(-)^{-1}\right)$ and a homomorphism of partial groups between them, $\varphi\colon \Mc_1\to \Mc_2$. Then, if $\Nc$ is a subgroup of $\Mb_1$, $\varphi(\Nc)$ is a subgroup of $\Mb_2$.
\end{lemma}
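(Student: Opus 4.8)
The plan is to unwind Definition~\ref{def:partial_subgroup_and_subgroup}: to see that $\varphi(\Nc)$ is a subgroup of $\Mb_2$ I must check three things, namely (i) that every word with letters in $\varphi(\Nc)$ already lies in $\Db_2$, so that $\Db_2\cap\Wb(\varphi(\Nc))=\Wb(\varphi(\Nc))$; (ii) that $\Pi_2\big(\Wb(\varphi(\Nc))\big)\subseteq\varphi(\Nc)$; and (iii) that $\varphi(\Nc)$ is closed under the inversion of $\Mb_2$. The hypothesis that $\Nc$ is a \emph{subgroup} (not merely a partial subgroup) of $\Mb_1$ is what makes (i) and (ii) cheap: it says precisely that $\Wb(\Nc)\subseteq\Db_1$ and $\Pi_1\big(\Wb(\Nc)\big)\subseteq\Nc$ (by Example~\ref{ex:grp} this is just the statement that $\Nc$ is an honest group inside $\Mb_1$). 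Before starting I would record the standard fact $\varphi(1_{\Mc_1})=1_{\Mc_2}$, obtained by applying (H2) to the empty word.

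For (i), I would take an arbitrary word $w=(\varphi(n_1),\dots,\varphi(n_k))\in\Wb(\varphi(\Nc))$, note that $w=\overline{\varphi}(n_1,\dots,n_k)$ with $(n_1,\dots,n_k)\in\Wb(\Nc)\subseteq\Db_1$, and conclude $w\in\Db_2$ straight from (H1). For (ii), the same word satisfies $\Pi_2(w)=\Pi_2\big(\overline{\varphi}(n_1,\dots,n_k)\big)=\varphi\big(\Pi_1(n_1,\dots,n_k)\big)$ by (H2), and $\Pi_1(n_1,\dots,n_k)\in\Nc$ since $\Nc$ is a subgroup, so $\Pi_2(w)\in\varphi(\Nc)$. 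Both steps are routine once the subgroup hypothesis is in hand.

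The real content is (iii). Since $\Nc$ is closed under inversion, it suffices to prove the inverse-preservation identity $\varphi(n)^{-1}=\varphi(n^{-1})$ for each $n\in\Nc$; then $\varphi(n)^{-1}=\varphi(n^{-1})\in\varphi(\Nc)$. Writing $a=\varphi(n)$ and $c=\varphi(n^{-1})$, the easy half is that $(a,c)\in\Db_2$ with $\Pi_2(a,c)=\varphi\big(\Pi_1(n,n^{-1})\big)=\varphi(1_{\Mc_1})=1_{\Mc_2}$, using (H1), (H2), (P3) and the preliminary remark. The hard part will be upgrading this ``right inverse'' relation to the actual equality $c=a^{-1}$, because a general partial group has no cancellation law. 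The trick I would use is to feed the word $(a,c)$ into (P3), which puts $\big((a,c)^{-1}\circ(a,c)\big)=(c^{-1},a^{-1},a,c)$ into $\Db_2$; then (D2) yields its subword $(a^{-1},a,c)\in\Db_2$. Evaluating $\Pi_2(a^{-1},a,c)$ with (P2) in the two possible ways gives $\Pi_2\big(a^{-1}\circ\Pi_2(a,c)\big)=\Pi_2(a^{-1},1)=a^{-1}$ on one side and $\Pi_2\big(\Pi_2(a^{-1},a)\circ c\big)=\Pi_2(1,c)=c$ on the other (the identity absorption $\Pi_2(x,1)=\Pi_2(1,x)=x$ itself being a one-line consequence of (P1) and (P2)). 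Hence $a^{-1}=c$, which is exactly inverse preservation and completes (iii). Assembling (i)--(iii) against Definition~\ref{def:partial_subgroup_and_subgroup} finishes the proof; the only delicate point, and the step I would expect to absorb most of the work, is this cancellation-free derivation of $\varphi(n)^{-1}=\varphi(n^{-1})$.
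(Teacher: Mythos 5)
The paper does not prove this lemma itself --- it is imported verbatim from Chermak's preprint \cite[Lemma 1.15]{Ch1} --- so there is no in-paper argument to compare against; judged on its own, your proof is correct. Parts (i) and (ii) follow exactly as you say from (H1), (H2) and the hypothesis $\Wb(\Nc)\subseteq\Db_1$, and your cancellation-free derivation of $\varphi(n)^{-1}=\varphi(n^{-1})$ by evaluating $\Pi_2(a^{-1},a,c)$ in two ways via \ref{cond:P2} is precisely the standard argument for inverse-preservation by partial-group homomorphisms, so the whole proof goes through.
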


The category with objects the partial groups and equipped with the notion of homomorphism above and the usual composition of maps is denoted $\Part$. In particular, given a partial group $\Mc$, the set of all its automorphisms is a group that we will denote $\Aut_\Part(\Mc)$. Notice that $\Part$ contains the category $\Groups$ of groups as a full subcategory as illustrated in Example \ref{ex:grp}. We are here interested in the following question.

\begin{question}\label{question}
Is the category $\Part$ universal? That is, given an arbitrary group $H$, does there exist a partial group $\Mb$ such that $\Aut_\Part(\Mb)=H$?
\end{question}

The category $\Part$ has all limits and all colimits, and here we only discuss colimits as we will employ them later. See \cite[Appendix A]{Ch1} or \cite{MR4548989} for constructions of limits and colimits (One can also construct limits and colimits through symmetric sets \cite[Corollary C]{HL}). So let $\C$ be a small category and let $\Mb\colon \C\to \Part$ be a functor, $C\mapsto \Mb(C)=(\Mc(C),\Db(C),\Pi(C),(-)_C^{-1})$. Then the colimit in the category $\Part$,
\[
\mathop{colim}\limits_{C\in \C} \Mb(C)=(\Mc,\Db,\Pi,(-)^{-1}),
\]
has alphabet $\Mc$ equal to the quotient set
\[
\Mc=\cup_{C\in \C} \Mc(C)/\sim.
\]
Here, $\sim$ is the smallest equivalence relation that contains the following relation,
\begin{equation}\tag{C1}\label{equ:part_colim_1}
f\in \Mor_\C(C_1,C_2), x\in \Mc(C_1)\Rightarrow x\sim \Mc(f)(x),
\end{equation}
and such that there is a map $\Pi$ closing the following diagram, 
\begin{equation}\tag{C2}\label{equ:part_colim_2}
\begin{gathered}
\xymatrix@R=15pt{
\cup_{C\in \C} \Db(C)\ar[d]\ar[rrr]^{\cup_{C\in \C} \Pi(C)}&&& \cup_{C\in C} \Mc(C)\ar[d]\\
\Db\ar[rrr]^\Pi &&& \Mc.
}
\end{gathered}
\end{equation}
In this diagram, $\Db$ is defined as the subset of words $u$ in the free monoid $\Wb(\Mc)$ for which there exists an object $C\in \Ob(\C)$ and a word $v$ in $\Db(C)$ such that $u$ is the image of $v$ under the component-wise application of $\sim$, the rightmost vertical arrow is induced by $\sim$, and the leftmost vertical arrow by its component-wise application.

We denote the equivalence classes corresponding to $\sim$ with brackets, $[-]$. Inversion on $\Mc$ is the only map making commutative the following diagram, where again vertical arrows are induced by $\sim$,
\[
\xymatrix@R=15pt{
\cup_{C\in \C} \Mc(C)\ar[d]\ar[rr]^{\cup_{C\in \C} (-)_C^{-1}}&& \cup_{C\in C} \Mc(C)\ar[d]\\
\Mc\ar[rr]^{(-)^{-1}} && \Mc.
}
\]


\section{Partial groups out of simple graphs}\label{sec:graphs}

Our goal in this section is to construct a partial group out of a graph decorated with a group on each vertex. Given a collection of groups $H_1,\dots,H_n$, we start by considering its free product $H=\ast_{i=1}^{n} H_i$. If we set $H_i^*=H_i\smallsetminus\{1\}$, then the group $H$ itself may be identified with the subset of words in $\Wb(H_1^*\cup H_2^*\cup\cdots\cup H_n^*)$  which are \emph{reduced}.

\begin{definition}\label{def:reduced_word}
A word $u=(g_1,g_2,\dots,g_r)\in \Wb(H_1^*\cup H_2^*\cup\cdots\cup H_n^*)$ is \emph{reduced} if it is the empty word or if $g_j\in H_{i_j}$ then $i_j\neq i_{j+1}$, $j=1,\ldots,r-1$. Given a reduced word $u=(g_1,g_2,\dots,g_r)\in \Wb(H_1^*\cup H_2^*\cup\cdots\cup H_n^*)$, its \emph{length} is defined as $\vert u\vert=r$.
\end{definition}

By \cite[Theorem 21]{Cohen89}, there is a map
\begin{equation}\tag{$\Pi$}\label{equ:reduced_form}
\Pi\colon \Wb(H_1^*\cup H_2^*\cup\cdots\cup H_n^*)\to \Wb(H_1^*\cup H_2^*\cup\cdots\cup H_n^*)
\end{equation}
that takes any word $w$ to its unique reduced form $\Pi(w)$. We will employ this notation throughout this section. The product in $H$ corresponds then to concatenation $\circ$ followed by $\Pi$.

\begin{definition}\label{def:cyclically_reduced_word}
A word $u=(g_1,g_2,\dots,g_r)\in \Wb(H_1^*\cup H_2^*\cup\cdots\cup H_n^*)$ is \emph{cyclically reduced} if all of its cyclic permutations $(g_2,g_3,\ldots,g_r,g_1),\ldots,(g_r,g_1,\ldots,g_{r-1})$ are reduced \cite[p.\ 34]{Cohen89}.
\end{definition}

The inverse of a reduced or cyclically reduced word is also a reduced or cyclically reduced word respectively. One may also notice that if $u$ is a reduced word, then  $|\Pi(u\circ u)|\leq 2|u|$ and equality holds if and only if $u$ is cyclically reduced.

\begin{definition}
The category  $\Graphs$ is the category with objects the simple undirected graphs and morphisms the graphs homomorphisms.
\end{definition}

In what follows, $G=(V,E)$ is an object in $\Graphs$ and $\mathcal{H}=\{H_v\}_{v\in V}$ is a collection of groups. Our aim is to define a partial group depending on $G$ and $\mathcal{H}$. Now, for a clique $C\subseteq V$ in $G$, we consider the free product $H_C=\mathop{\ast}\limits_{v\in C} H_v$ and the following construction.

\begin{definition}\label{def:M(C)}
For a clique $C$ in $G$, consider the $4$-tuple,
\[
\Mb(C)=(\CR(C),\Db(C),\Pi(C),(-)_C^{-1}),
\]
where,
\begin{enumerate}
\item $\CR(C)$ is the set of cyclically reduced words in $\Wb\left(\bigcup_{v\in C} H_v^*\right)$,
\item $\Db(C)$ is the set of words $w=(u_1,u_2,\dots,u_n)\in\Wb(\CR(C))$ such that,
\begin{equation}\tag{H}\label{eq:H}
\text{for all }1\leq i \leq j \leq n,\qquad \Pi(u_i\circ u_{i+1}\circ\cdots\circ u_j)\in \CR(C),
\end{equation}
\item $\Pi(C)\colon \Db(C)\to \CR(C)$ is given by reduced form,
\[
(u_1,u_2,\dots,u_n)\in\Db(C)\mapsto \Pi(u_1\circ u_2\circ\cdots\circ u_n)\in \CR(C)\text{, and}
\]
\item inversion $(-)_C^{-1}\colon \CR(C)\to \CR(C)$ is given by inversion in the group $H_C$ restricted to $\CR(C)$.
\end{enumerate}

\end{definition}

Although the graph $G$ may have infinite order, the words we are considering here are always of finite length, and thus involve only a finite number of vertices.

\begin{theorem}\label{thm:Mb(C)_is_partial_group}
Let $G=(V,E)$ be a simple undirected graph, $\mathcal{H}=\{H_v\}_{v\in V}$ a collection of groups, and $C$ a clique in $G$. Then $\Mb(C)$ is a partial group.
\end{theorem}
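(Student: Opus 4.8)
The plan is to verify the four defining axioms of a partial group, namely \ref{cond:D1}, \ref{cond:D2} for the domain $\Db(C)$, and \ref{cond:P1}, \ref{cond:P2}, \ref{cond:P3} for the product $\Pi(C)$ together with the inversion. The starting observation is that $\Mb(C)$ lives inside the genuine group $H_C=\ast_{v\in C}H_v$, where $\CR(C)$ is realized as the subset of cyclically reduced words, $\Pi$ is literally the reduced-form map, and inversion is the group inversion; everything we need will ultimately be inherited from the associativity of multiplication in $H_C$, so the real content is only checking that the restrictive condition \eqref{eq:H} interacts correctly with each axiom.

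First I would handle the domain axioms. For \ref{cond:D1}, I must show $\CR(C)\subseteq\Db(C)$: a single cyclically reduced word $u$ forms a one-letter word whose only subword-product is $\Pi(u)=u\in\CR(C)$, so \eqref{eq:H} holds trivially. For \ref{cond:D2}, given $u\circ v\in\Db(C)$ I must show $u,v\in\Db(C)$; this is immediate because the family of consecutive subproducts required to be cyclically reduced for $u$ (respectively $v$) is a \emph{subfamily} of those required for the concatenation $u\circ v$, so condition \eqref{eq:H} descends to each factor.

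Next come the product axioms. Axiom \ref{cond:P1} is immediate: on a length-one word $(u)$ the product $\Pi(C)$ returns $\Pi(u)=u$ since $u$ is already reduced, so $\Pi(C)$ restricts to the identity on $\CR(C)$. Axiom \ref{cond:P3} follows from the group structure: if $u=(u_1,\dots,u_n)\in\Db(C)$ then $u^{-1}=(u_n^{-1},\dots,u_1^{-1})$, the concatenation $u^{-1}\circ u$ has consecutive subproducts that are again cyclically reduced (because inverses of cyclically reduced words are cyclically reduced, and the subproducts of $u^{-1}\circ u$ are built from the same data as those of $u$), so $u^{-1}\circ u\in\Db(C)$, and then $\Pi(u^{-1}\circ u)$ is the product $g^{-1}g=1$ computed in $H_C$, giving $1_{\Mc}$ as required.

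The main obstacle is axiom \ref{cond:P2}, the associativity-with-insertion condition: assuming $u\circ v\circ w\in\Db(C)$, one must prove both that $u\circ\Pi(v)\circ w\in\Db(C)$ and that $\Pi(u\circ\Pi(v)\circ w)=\Pi(u\circ v\circ w)$. The second equality is the easy half, as both sides equal the reduced form of the single $H_C$-product of all the letters, by associativity in $H_C$. The genuinely delicate part is showing the new word $u\circ(\Pi(v))\circ w$ again satisfies \eqref{eq:H}, i.e.\ that every consecutive subproduct stays \emph{cyclically} reduced; here I would argue that each consecutive subproduct of $u\circ\Pi(v)\circ w$ equals, as an element of $H_C$, a corresponding consecutive subproduct of the original word $u\circ v\circ w$ (collapsing the block $v$ to its reduced form does not change the group element of any spanning subproduct), and since those original subproducts were cyclically reduced by hypothesis, so are the new ones. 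The care needed is to treat the subproducts that only partially overlap the inserted letter $\Pi(v)$, for which one checks that $\Pi(v)$ combines with its neighbours exactly as the block $v$ did; I expect this bookkeeping, rather than any conceptual difficulty, to be where the proof's weight lies.
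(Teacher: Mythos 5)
Your proposal is correct and follows essentially the same route as the paper: both verify the axioms \ref{cond:D1}--\ref{cond:P3} directly, inheriting everything from the reduction map of the free product $H_C$ and checking that condition \eqref{eq:H} survives each operation. The only difference is in where the detail is spent: the paper dispatches \ref{cond:P2} in one line and devotes its explicit four-case analysis to the telescoping cancellation needed for \ref{cond:P3}, whereas you elaborate \ref{cond:P2} and compress that \ref{cond:P3} telescoping into the phrase ``built from the same data as those of $u$'' --- which is precisely the step (straddling subproducts cancel down to consecutive subproducts of $u$ or $u^{-1}$) that the paper's case analysis makes precise.
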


\begin{proof}
By construction, \ref{cond:D1}, \ref{cond:D2} are both satisfied. Since elements in $\Mc(C)$ are cyclically reduced words, they are equal to their reduced forms and \ref{cond:P1} is satisfied. The axiom \ref{cond:P2} is satisfied thanks to \eqref{eq:H}.
Finally, to check \ref{cond:P3}, let $w=(u_1,u_2,\dots,u_n)\in \Db(C)$ and set
\[(x_1,x_2,\dots,x_{2n}):=w\circ w^{-1}=(u_1,u_2,\dots,u_n,u_n^{-1},u_{n-1}^{-1},\dots,u_1).\]
 Choose $1\leq i\leq j\leq 2n$ and let $\chi_{i,j}=(x_i,x_{i+1},\dots,x_j)$. Then, four distinct situations may happen,
\begin{enumerate}[label = (\alph*)]
\item $i\leq j\leq n$ and $\chi_{i,j}$ is a subword of $w$,
\item $n+1\leq i\leq j$ and $\chi_{i,j}$ is a subword of $w^{-1}$,
\item $i\leq n <j$ and $n-i+1\geq j-n$ and $\Pi(\chi_{i,j})=\Pi(u_i,u_{i+1},\dots, u_{2n-j})$,
\item $i\leq n <j$ and $n-i+1 \leq j-n$ and $\Pi(\chi_{i,j})=\Pi(u_{i-1}^{-1},\dots,u_{2n+1-j}^{-1})$.
\end{enumerate}
Therefore, in all cases $\Pi(\chi_{i,j})=\Pi(\alpha)$ where $\alpha$ is a subword of either $w$ or $w^{-1}$, and since both $w,w^{-1}\in\Db(C)$, \ref{cond:P3} is satisfied by condition \eqref{eq:H}.
\end{proof}

Let $\Delta$ be the poset of cliques $C\subseteq V$ in $G$. An inclusion $C_1\subseteq C_2$ between cliques gives rise to an inclusion map $\CR(C_1)\subseteq \CR(C_2)$ and to its associated map of partial groups $\Mb(C_1)\to \Mb(C_2)$. This way, we have a functor $\Mb(-)\colon \Delta\to \Part$.

\begin{definition}\label{def:DM}
Let $G=(V,E)$ be a simple undirected graph and $\mathcal{H}=\{H_v\}_{v\in V}$ a collection of groups. We define the partial group associated to the pair $(G,\mathcal{H})$ by
\[
\Mb(G,\mathcal{H})=(\Mc(G,\mathcal{H}),\Db(G,\mathcal{H}),\Pi(G,\mathcal{H}),(-)_{G,\mathcal{H}}^{-1}):=\mathop{colim}\limits_{C\in \Delta} \Mb(C).
\]
\end{definition}

Then, from the simple shape of the category $\Delta$ and the functor $\Mb(-)$, together with uniqueness of the reduced form \eqref{equ:reduced_form}, it follows that map
\begin{equation}\tag{S}\label{eq:S}
\begin{gathered}
\xymatrix@R=5pt{
\Mc(G,\mathcal{H})\ar[r] & \{\text{ cyclically reduced words in }\Wb(\cup_{v\in V} H_v^*)\text{ }\}\\
[(g_1,g_2,\ldots,g_r)]\ar@{|->}[r] & (g_1,g_2,\ldots,g_r)
}
\end{gathered}
\end{equation}
is well defined and injective, where $(g_1,g_2,\ldots,g_r)\in \CR(C)$ for some clique $C$. From now on, we will make the identification given by \eqref{eq:S} and the corresponding one for $\Db(G,\mathcal{H})$ without further notice.

\begin{lemma}\label{lem:1}
Let $G=(V,E)$ be a simple undirected graph and $\mathcal{H}=\{H_v\}_{v\in V}$ be a collection of  groups.
\begin{enumerate}[label=(\alph*)]
\item\label{lem:1a} For all $v\in V$, $H_v$ is a  subgroup of $\Mb(G,\mathcal{H})$.
\item\label{lem:1b} If $K\subseteq \Mc(G,\mathcal{H})$ is a non-trivial finite subgroup of  $\Mb(G,\mathcal{H})$, then there exists a unique $v\in V$ such that $K\leq H_v$.
\item\label{lem:1c} For all $v,v'\in V$ $v\ne v'$, and all $(h_v,h_{v'})\in H_v^*\times H_{v'}^*$, $((h_v),(h_{v'}))\in \Db(G,\mathcal{H})$ if and only if $\{v,v'\}\in E$.
\end{enumerate}
\end{lemma}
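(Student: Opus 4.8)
The plan is to establish the three claims essentially by unpacking the definition of $\Mb(G,\mathcal{H})$ as a colimit together with the identification in~\eqref{eq:S}. The key observation I would use throughout is that, after the identification~\eqref{eq:S}, the alphabet $\Mc(G,\mathcal{H})$ is just the set of cyclically reduced words in $\Wb(\cup_{v\in V}H_v^*)$, and $\Db(G,\mathcal{H})$ consists precisely of those words $w=(u_1,\dots,u_n)$ for which there exists a \emph{single clique} $C$ with all the $u_i\in\CR(C)$ and satisfying the hypothesis~\eqref{eq:H} inside $\Mb(C)$. This ``single clique'' reformulation of $\Db(G,\mathcal{H})$ is the engine behind all three parts, so I would state it carefully first.

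\smallskip

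\noindent\textbf{Part (a).} For a fixed $v\in V$, the singleton $\{v\}$ is a clique, so $H_v=H_v^*\cup\{1\}$ embeds as $\CR(\{v\})$ (the cyclically reduced words supported on $H_v^*$ are exactly the length-$\leq 1$ words, i.e.\ the elements of $H_v$). I would check that $H_v$ is closed under inversion and that $\Wb(H_v)\cap\Db(G,\mathcal{H})=\Wb(H_v)$, i.e.\ every word with letters in $H_v$ lies in the domain: indeed, for such a word the partial products $\Pi(u_i\circ\cdots\circ u_j)$ are just products in the group $H_v$, hence lie in $H_v\subseteq\CR(\{v\})$, so~\eqref{eq:H} holds automatically with $C=\{v\}$. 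Thus $H_v$ is a \emph{subgroup} in the sense of Definition~\ref{def:partial_subgroup_and_subgroup}.

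\smallskip

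\noindent\textbf{Part (b).} Let $K\leq\Mc(G,\mathcal{H})$ be a non-trivial finite subgroup. The crux is to show every non-identity element of $K$ is a single letter $g\in H_v^*$ for one common vertex $v$. I would argue that if some $k\in K$ is a cyclically reduced word of length $\geq 2$, then $k$ has infinite order in the ambient free-product structure: since $k$ is cyclically reduced, $|\Pi(k^{\circ m})|=m|k|$ grows without bound (using the remark that equality $|\Pi(u\circ u)|=2|u|$ characterises cyclically reduced $u$), contradicting finiteness of $K$. Hence every element of $K$ has length $\leq 1$, so $K\subseteq\cup_{v\in V}H_v$. As $K$ is a subgroup containing products, and distinct $H_v^*$ involve disjoint letters whose products cannot cancel to shorten below length $2$ unless the letters share a vertex, any two non-trivial elements of $K$ must lie in the same $H_v$; here I would invoke part~(c) (or a direct reduced-word argument) to see that $K$ cannot meet two different $H_v^*$, forcing a unique $v$ with $K\leq H_v$. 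The uniqueness of $v$ follows from non-triviality of $K$ and disjointness of the $H_v^*$.

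\smallskip

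\noindent\textbf{Part (c).} For distinct $v,v'$ and $(h_v,h_{v'})\in H_v^*\times H_{v'}^*$, the word $((h_v),(h_{v'}))$ lies in $\Db(G,\mathcal{H})$ iff there is a clique $C$ with $h_v,h_{v'}\in\CR(C)$ and~\eqref{eq:H} satisfied; since each letter is already a single-letter cyclically reduced word, condition~\eqref{eq:H} reduces to $\Pi(h_v\circ h_{v'})=(h_v,h_{v'})\in\CR(C)$, which holds precisely when $v,v'$ both lie in a common clique $C$, i.e.\ when $\{v,v'\}\in E$. Conversely, if $\{v,v'\}\notin E$ then no clique contains both, so the word is not in any $\Db(C)$ and hence not in $\Db(G,\mathcal{H})$. \textbf{The main obstacle} I anticipate is part~(b): being scrupulous that finiteness genuinely forces length $\leq 1$, and that a finite subgroup cannot spread across two distinct vertices, requires the free-product normal-form machinery (cyclically reduced words have infinite order) rather than a purely formal colimit argument, so I would make that length/order estimate fully explicit.
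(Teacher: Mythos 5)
Your proposal is correct and follows essentially the same route as the paper: part (a) by unpacking the identification \eqref{eq:S} with the clique $\{v\}$, part (b) by observing that a cyclically reduced word of length at least $2$ has powers of strictly growing length (hence infinite order) and that two non-trivial elements from distinct $H_v^*$ would multiply inside the subgroup $K$ to such a word, and part (c) by the single-clique characterisation of $\Db(G,\mathcal{H})$ coming from the colimit. The only cosmetic difference is that the paper closes part (b) directly with the infinite-order contradiction rather than detouring through part (c), which your parenthetical ``direct reduced-word argument'' already covers.
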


\begin{proof}
Property \ref{lem:1a} is a consequence of Equation \eqref{eq:S}. For point \ref{lem:1b}, consider a cyclically reduced word $u\in K\smallsetminus \{1\}$. If length $\vert u\vert>1$,  then the length of the cyclically reduced word
\[
\Pi(\overbrace{u\circ u \ldots \circ u}^\text{$n$ times})
\]
is $n\vert u\vert$. Therefore, if $u\in \Mc(G,\Hc)$ is an element of finite order, $u$ is a word of length 1 and it is an element of one of the groups $H_v$ for $v\in V$. Now, assume there exist $k,k'\in K\smallsetminus\{1\}$ and $v,v'\in V$ such that $k\in H_v^*$, $k'\in H_{v'}^*$ but $v\neq v'$. Then $(k,k')$ is a cyclically reduced word of length 2 in $\Wb(\cup_{v\in V} H_v^*)$ and, as $K$ is a subgroup of $\Mc(G,\Hc)$, it belongs to $K$ and $\Mc(G,\Hc)$. In particular, it cannot be an element of finite order in $\Mc(G,\Hc)$.

For part \ref{lem:1c}, assume first that $((h_v),(h_{v'}))\in \Db(G,\mathcal{H})$. Then, by the definition of colimit of partial groups, there exists a clique $C$ of $G$ and a word $(u_1,u_2)\in \Db(C)$ such that $((h_v),(h_{v'}))=(u_1,u_2)$ with $u_i\in \CR(C)$. Thus, we must have $u_1=(h_v)$ and $v\in C$, and $u_2=(h_{v'})$ and $v'\in C$. In particular, $\{v,v'\}\in E$. 
The reversed implication is straightforward.
\end{proof}

Now we define the category of decorated graphs, which is built on top of the category $\Graphs$.

\begin{definition}\label{def:decgraphs}
The category $\DecGraphs$ is the category with object set the pairs $(G,\mathcal{H})$, where $G=(V,E)\in Ob(\Graphs)$ and $\mathcal{H}=\{H_v\}_{v\in V}$ is a collection of groups indexed by the vertices of $G$, and with morphisms the pairs
\[
(f_G,f_\mathcal{H})\colon (G,\mathcal{H})\to (G',\mathcal{H}'),
\]
where $f_G\in \Mor_\Graphs(G,G')$ and $f_\Hc=\{f_v\}_{v\in V}$ is a collection of \emph{injective} group homomorphisms $f_v\colon H_v\to H'_{f_G(v)}$ for all vertices $v$ of $G$.
\end{definition}

A morphism $(f_G,f_\mathcal{H})$ in $\DecGraphs$ takes the clique $C=\{v_1,\ldots,v_n\}$ in $G$ to the clique $C'=\{f_G(v_1),\ldots,f_G(v_n)\}$ in $G'$, and note that the restriction ${f_G}_{|C}$ is injective.  In turn, it takes the cyclically reduced word
\[
u=(g_1,g_2,\dots,g_r)\in\CR(C)\text{ with $g_j\in H_{v_{i_j}}$,}
\]
to the cyclically reduced word
\[
u'=(f_{v_{i_1}}(g_1),f_{v_{i_2}}(g_2),\dots,f_{v_{i_r}}(g_r))\in\CR(C')\text{ with $f_{v_{i_j}}(g_j)\in H'_{f_G(v_{i_j})}$.}
\]
To ensure that $u'$ is a cyclically reduced word we do need that the morphisms   $f_{v_{i_1}},f_{v_{i_2}},\ldots,f_{v_{i_r}}$ are injective group homomorphisms. It is easy to check that the map defined above,
\begin{equation}\tag{V}\label{equ:varphi}
\varphi_{(f_G,f_\mathcal{H})}\colon \CR(C)\to \CR(C'),
\end{equation}
induces a homomorphism of partial groups from $\Mb(C)$ to $\Mb(C')$. Moreover, we have a natural transformation between the functors
\[
\Mb(-)\colon \Delta\to \Part\qquad\text{and}\qquad \Mb(-)'\colon \Delta'\to \Part
\]
associated to $(G,\mathcal{H})$ and $(G',\mathcal{H}')$ respectively. Upon taking colimits, we obtain a map of partial groups from $\Mb(G,\mathcal{H})$ to $\Mb(G',\mathcal{H}')$, see Definition \ref{def:DM}. This way we obtain a functor,
\begin{equation}\tag{M}\label{eq:M}
\Mb(-,-)\colon\DecGraphs\to\Part,
\end{equation}
that we will investigate below.

\section{Graph morphisms from partial groups homomorphisms}\label{section:embedding}

To further study the functor \eqref{eq:M}, we consider the following categories.

\begin{definition}\label{def:findecgraphs}
The category $\FinDecGraphs$ is the full subcategory of the category $\DecGraphs$ with objects the pairs $(G,\mathcal{H})$, where $\mathcal{H}:=\{H_v\}_{v\in V}$ is a collection of \emph{non-trivial finite} groups.
\end{definition}

\begin{definition}\label{def:injpart}
The category $\InjPart$ is the wide subcategory of the category $\Part$ with morphisms the partial group homomorphisms whose kernel has no torsion, i.e., it contains no non-trivial finite subgroup.
\end{definition}

We show below that the functor \eqref{eq:M} takes values in $\InjPart$ and that the category $\FinDecGraphs$ embeds in the category $\InjPart$.

\begin{theorem}\label{thm:M_is_embedding}
The functor
\[
\Mb(-,-)\colon\FinDecGraphs\to\InjPart,
\]
is full and faithful.
\end{theorem}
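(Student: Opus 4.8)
The plan is to treat faithfulness and fullness by reconstructing the pair $(f_G,f_\mathcal{H})$ from the induced homomorphism. The key structural fact is that the functor \eqref{eq:M} sends a morphism $(f_G,f_\mathcal{H})$ to a homomorphism $\varphi$ whose restriction to the subgroup $H_v$ (which sits inside $\Mb(G,\mathcal{H})$ as length-one words by Lemma~\ref{lem:1}(a)) is exactly $f_v$, so that $\varphi(H_v)\subseteq H'_{f_G(v)}$. For faithfulness, if two morphisms $(f_G,f_\mathcal{H})$ and $(g_G,g_\mathcal{H})$ induce the same $\varphi$, then injectivity of the $f_v$ makes $\varphi(H_v)$ a non-trivial finite subgroup of $\Mb(G',\mathcal{H}')$ by Lemma~\ref{lem:subgroup_to_subgroup}, so Lemma~\ref{lem:1}(b) furnishes a \emph{unique} vertex $w$ with $\varphi(H_v)\leq H'_w$. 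As $\varphi(H_v)$ lies in both $H'_{f_G(v)}$ and $H'_{g_G(v)}$, this forces $f_G(v)=g_G(v)$, and then $f_v=\varphi|_{H_v}=g_v$; hence the two morphisms agree.

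For fullness, start from an arbitrary morphism $\varphi\colon\Mb(G,\mathcal{H})\to\Mb(G',\mathcal{H}')$ of $\InjPart$. The torsion-free hypothesis on $\ker\varphi$ does the decisive work: by the computation in the proof of Lemma~\ref{lem:1}(b) every element of finite order of $\Mb(G,\mathcal{H})$ is a length-one word belonging to some $H_v$, so a torsion-free kernel means precisely that $\varphi$ is injective on each finite group $H_v$. Thus $\varphi(H_v)$ is a non-trivial finite subgroup, Lemma~\ref{lem:1}(b) selects a unique vertex that I define to be $f_G(v)$, and I set $f_v:=\varphi|_{H_v}\colon H_v\to H'_{f_G(v)}$, an injective group homomorphism. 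To see that $f_G$ respects adjacency, take an edge $\{v,v'\}\in E$ and non-trivial $h_v\in H_v$, $h_{v'}\in H_{v'}$: by Lemma~\ref{lem:1}(c) the word $((h_v),(h_{v'}))$ lies in $\Db(G,\mathcal{H})$, hence so does $((\varphi(h_v)),(\varphi(h_{v'})))$, and since $\varphi(h_v)$ and $\varphi(h_{v'})$ are non-trivial elements of $H'_{f_G(v)}$ and $H'_{f_G(v')}$, Lemma~\ref{lem:1}(c) applied in $G'$ gives $\{f_G(v),f_G(v')\}\in E'$ provided $f_G(v)\neq f_G(v')$. Granting this, $(f_G,f_\mathcal{H})$ is a morphism of $\FinDecGraphs$.

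It then remains to verify $\Mb(f_G,f_\mathcal{H})=\varphi$. By construction both sides are homomorphisms agreeing on every $H_v$, so everything reduces to showing that the subgroups $\{H_v\}_{v\in V}$ generate $\Mb(G,\mathcal{H})$, in the sense that each cyclically reduced word arises from length-one words by iterated products $\Pi$ along words of $\Db(G,\mathcal{H})$. I would prove this by induction on the length $r$ via a splitting lemma: any cyclically reduced $x=(g_1,\dots,g_r)$ with $r\geq 2$ can be written $x=\Pi(y_1,y_2)$ with $(y_1,y_2)\in\Db(G,\mathcal{H})$ and $y_1,y_2$ strictly shorter cyclically reduced words. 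Indeed, letting $p_i$ denote the vertex carrying $g_i$, the cut $(g_1,\dots,g_k)\circ(g_{k+1},\dots,g_r)$ yields two cyclically reduced halves for $k=1$ when $p_2\neq p_r$, for $k=r-1$ when $p_1\neq p_{r-1}$, and (if both fail, so that $p_2=p_r$ and therefore $p_3\neq p_2=p_r$) for $k=2$; an admissible cut thus always exists, and condition \eqref{eq:H} certifies that $(y_1,y_2)\in\Db(G,\mathcal{H})$. Two homomorphisms agreeing on the $H_v$ therefore agree on all of $\Mb(G,\mathcal{H})$, giving $\Mb(f_G,f_\mathcal{H})=\varphi$.

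I expect two points to require the most care. The first is the generation step: unlike in an honest free product, the word $((g_1),\dots,(g_r))$ assembled from the individual letters of $x$ need not belong to $\Db(G,\mathcal{H})$, since an internal sub-word can fail to be cyclically reduced, so one cannot evaluate $\varphi$ letter by letter and must instead route $x$ through a bracketing whose intermediate products all remain cyclically reduced—exactly what the splitting lemma provides. The second, and more delicate, point is adjacency: Lemma~\ref{lem:1}(c) applies only when $f_G(v)\neq f_G(v')$, so one must rule out (or otherwise account for) the collapse of an edge under $\varphi$ in order to conclude that $f_G$ is a genuine morphism of $\Graphs$; this is where the torsion-free hypothesis and the precise conventions for graph morphisms have to be pinned down. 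The torsion-free hypothesis is itself used once but decisively, to keep each $\varphi(H_v)$ non-trivial so that $f_G$ is well defined through Lemma~\ref{lem:1}(b).
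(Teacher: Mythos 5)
Your overall strategy coincides with the paper's: both reconstruct $(f_G,f_\Hc)$ from the given homomorphism by combining the torsion-free-kernel hypothesis with Lemma \ref{lem:1}, and your faithfulness argument is the paper's up to phrasing. Your splitting lemma, however, is a genuine improvement rather than a cosmetic variant. The paper verifies $\Mb(f_G,f_\Hc)=f$ by writing $(g_1,\ldots,g_r)=\Pi(G,\Hc)((g_1),\ldots,(g_r))$ and applying (H2) to the word of singletons, which tacitly assumes $((g_1),\ldots,(g_r))\in\Db(G,\Hc)$; that assumption fails in general for exactly the reason you identify: for a triangle $\{v_1,v_2,v_3\}$ and $a,a'\in H_{v_1}^*$, $b\in H_{v_2}^*$, $c\in H_{v_3}^*$, the word $(a,b,a',c)$ is cyclically reduced, but $\Pi((a)\circ (b)\circ (a'))=(a,b,a')$ is not, so condition \eqref{eq:H} is violated. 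Your induction through a cut into two shorter cyclically reduced factors is the correct repair, and your case analysis on the cut point $k\in\{1,2,r-1\}$ does exhaust all possibilities.

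The point you flag as delicate but leave open --- ruling out the collapse $f_G(v)=f_G(v')$ of an edge $\{v,v'\}$ --- is a genuine gap, and one checks that it cannot be closed for arbitrary morphisms of $\InjPart$. Let $\psi\colon\Z_2\ast\Z_2\to S_3$ send the two generators to $(1\,2)$ and $(1\,3)$; restricting $\psi$ to cyclically reduced words gives a homomorphism of partial groups $\Pb(K_2)\to\Mb(K_1,\{S_3\})=S_3$ satisfying (H1) and (H2), whose kernel consists of the identity together with elements of infinite order, hence is a morphism of $\InjPart$ --- yet there is no graph homomorphism $K_2\to K_1$, since the edge would have to map to a loop. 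So the induced $f_G$ need not be a morphism of $\Graphs$, and fullness as stated fails. The paper's proof has the same defect: it invokes Lemma \ref{lem:1}.\ref{lem:1c} in $G'$ without knowing $f_G(v)\neq f_G(v')$, which that lemma requires. The argument (yours and the paper's) is sound once one restricts to morphisms that are injective on vertex groups' images in the sense that $f_G$ is injective --- in particular for isomorphisms and automorphisms, which is all that Theorem \ref{thm:aut_of_partial} and the main theorems actually use, since there applying the same construction to $f^{-1}$ forces $f_G$ to be a bijection. In summary: your proof is at least as rigorous as the paper's, your splitting lemma repairs a real imprecision, and the one step you leave unresolved is the step on which the general statement itself breaks down.
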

\begin{proof}
First we check that for a morphism in $\DecGraphs$,
\[
(f_G,f_\mathcal{H})\colon (G,\mathcal{H})\to (G',\mathcal{H}'),
\]
the morphism $\Mb(f_G,f_\mathcal{H})$ belongs to $\InjPart$. In fact, let $x\in\ker(\Mb(f_G,f_\mathcal{H}))$ have finite order. Then, by Lemma \ref{lem:1}.\ref{lem:1b}, there exists $v\in V$ such that $x\in H_v$. If $x\neq 1$, then $(x)\in \CR(\{v\})$ is cyclically reduced and $\Mb(f_G,f_\mathcal{H})(x)=1=f_v(x)$. As $f_v$ is injective, we get that $x=1$, a contradiction.

Now, for $(G,\mathcal{H})$ and $(G',\mathcal{H}')$ in $\FinDecGraphs$ and a morphism
\[
f\in \Mor_\InjPart(\Mb(G,\Hc),\Mb(G',\Hc'))
\]
induced by the map of sets
\[
f\colon \Mc(G,\Hc)\to \Mc(G',\Hc'),
\]
we will construct a morphism of decorated graphs
\[
(f_G,f_\mathcal{H})\colon (G,\mathcal{H})\to (G',\mathcal{H}')
\]
such that $\Mb(f_G,f_\mathcal{H})=f$.  Fix a vertex $v\in V$. Then, by Lemma \ref{lem:1}.\ref{lem:1a}, $H_v$ is a non-trivial finite subgroup of $\Mb(G,\mathcal{H})$ and, as $\ker(f)$ has no torsion, the restriction $f_{|H_v}$ is injective. Thus, by Lemma \ref{lem:subgroup_to_subgroup},   $f(H_v)$ is a non-trivial finite subgroup of $\Mb(G',\mathcal{H}')$. By Lemma \ref{lem:1}.\ref{lem:1b}, there exists a unique $v'\in V'$ such that $f(H_v)\subseteq H'_{v'}$ and the group homomorphism
\[
f_{|H_v}\colon H_v \to H'_{v'}
\]
is injective. The correspondence $v\mapsto f_G(v):=v'$ defines a map $f_G\colon V\to V'$ from the vertices of $G$ to those of $G'$. In fact, this map is a graph homomorphism: for every $\{v,w\}\in E$ and every  element $(h_v,h_w)\in H_v\times H_{w}$ with $h_v\neq 1\neq h_w$, $((h_v),(h_w))\in \Db(G,\Hc)$ by Lemma \ref{lem:1}.\ref{lem:1c}. Thus, if we apply $f$, we get that $((f(h_v)),(f(h_w)))\in \Db(G',\Hc')$ and, by Lemma \ref{lem:1}.\ref{lem:1c} again, $\{f_V(v),f_V(w)\}\in E$.

So me may define the morphism $(f_G,f_\mathcal{H})\colon (G,\mathcal{H})\to (G',\mathcal{H}')$ in $\FinDecGraphs$, where $f_G$ is the graph homomorphism constructed above and $f_\Hc=\{f_v\}_{v\in V}$ with
\begin{equation}\label{equ:deffv}
f_v = f_{|H_v}
\end{equation}
for each vertex $v$ of $G$. Let $u=(g_1,\ldots,g_r)\in \CR(C)$ for a clique $C=\{v_1,\ldots,v_n\}$ of $G$ and with with $g_j\in H_{v_{i_j}}$, see Equation \eqref{eq:S}. Note that $(g_i)\in \CR(C)$ for all $i$ and that, as $\Pi(G,\Hc)$ is given by reduced form, $\Pi(G,\Hc)((g_1),\ldots,(g_r))=(g_1,\ldots,g_r)$.  Then we also have that,
\begin{eqnarray*}
f(g_1,\ldots,g_r)&=&f(\Pi(G,\Hc)((g_1),\ldots,(g_r)))\\
&\stackrel{\textrm{(H2)}}=&\Pi(G',\Hc')(\overline{f}((g_1),\ldots,(g_r)))\\
&=&\Pi(G',\Hc')((f(g_1)),\ldots,(f(g_r)))\\
&=&\Pi(G',\Hc')((f_{|H_{v_{i_1}}}(g_1)),\ldots,(f_{|H_{v_{i_r}}}(g_r)))\\
&\stackrel{\eqref{equ:deffv}}=&\Pi(G',\Hc')((f_{v_{i_1}}(g_1)),\ldots,(f_{v_{i_r}}(g_r)))\\
&\stackrel{\eqref{equ:varphi}}=&\Pi(G',\Hc')(\overline{\varphi_{(f_G,f_\mathcal{H})}}((g_1),\ldots,(g_r)))\\
&\stackrel{\textrm{(H2)}}=&\varphi_{(f_G,f_\mathcal{H})}(\Pi(G,\Hc)((g_1),\ldots,(g_r)))\\
&=&\varphi_{(f_G,f_\mathcal{H})}(g_1,\ldots,g_r).
\end{eqnarray*}
Thus $\varphi_{(f_G,f_\mathcal{H})}=f$, $\Mb(f_G,f_\mathcal{H})=f$, and the functor $\Mb(-,-)$ is full. It remains to prove that this functor is faithful but this is straightforward: if we start with a morphism
\[
(f_G,f_\Hc)\in \Mor_\FinDecGraphs((G,\Hc),(G,\Hc'))
\]
and apply the earlier construction to
\[
f=\Mb(f_G,f_\Hc)\in \Mor_\InjPart(\Mb(G,\Hc),\Mb(G',\Hc')),
\]
it is immediate that we recover the morphism $(f_G,f_\Hc)$.
\end{proof}

If we particularise the last result to automorphisms, we get the following exact sequence.

\begin{theorem}\label{thm:aut_of_partial}
Let $(G,\Hc)\in \FinDecGraphs$. Then there exists an exact sequence
\begin{equation}\label{eq:aut_FinDecGraphs}
1\to \prod_{v\in V}\Aut_\Groups(H_v)\to \Aut_\Part\big(\Mb(G,\mathcal{H})\big)\to \Aut_\Graphs(G).
\end{equation}
\end{theorem}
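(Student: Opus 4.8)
The plan is to read off the sequence \eqref{eq:aut_FinDecGraphs} from the full faithfulness of $\Mb(-,-)$ proved in Theorem \ref{thm:M_is_embedding}, after checking that automorphisms of $\Mb(G,\Hc)$ automatically lie in $\InjPart$. Any homomorphism of partial groups sends $1$ to $1$ (apply \ref{equ:varphioverlinevarphi_commute} to $u=\emptyset$), so an automorphism $\varphi$ of $\Mb(G,\Hc)$ is an injective map fixing the unit; hence $\ker(\varphi)=\{1\}$ is torsion-free, and both $\varphi$ and $\varphi^{-1}$ are morphisms of $\InjPart$. Therefore $\Aut_\Part\big(\Mb(G,\Hc)\big)=\Aut_\InjPart\big(\Mb(G,\Hc)\big)$. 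Since $\Mb(-,-)\colon\FinDecGraphs\to\InjPart$ is full and faithful, it is a bijection on endomorphism monoids, and a bijective monoid homomorphism restricts to a group isomorphism on invertible elements; this gives
\[
\Aut_\FinDecGraphs\big((G,\Hc)\big)\xrightarrow{\ \cong\ }\Aut_\Part\big(\Mb(G,\Hc)\big).
\]

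Next I would describe $\Aut_\FinDecGraphs((G,\Hc))$ explicitly. If $(f_G,f_\Hc)$ is an automorphism with inverse $(g_G,g_\Hc)$, then $g_G\circ f_G=\id$ and $f_G\circ g_G=\id$ force $f_G\in\Aut_\Graphs(G)$, while the group components satisfy $g_{f_G(v)}\circ f_v=\id_{H_v}$ and $f_v\circ g_{f_G(v)}=\id_{H_{f_G(v)}}$ at every vertex $v$; these relations exhibit $f_v$ and $g_{f_G(v)}$ as mutually inverse, so each injective homomorphism $f_v\colon H_v\to H_{f_G(v)}$ is in fact an isomorphism. With this description in hand, I would define the right-hand map of \eqref{eq:aut_FinDecGraphs} as the composite of the isomorphism above with the projection $(f_G,f_\Hc)\mapsto f_G$; it is a group homomorphism because the graph component of a composite in $\FinDecGraphs$ is the composite of graph components. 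Its kernel consists of the pairs $(\id_G,f_\Hc)$ with each $f_v\in\Aut_\Groups(H_v)$, and the assignment $\{f_v\}_{v\in V}\mapsto(\id_G,\{f_v\})$ defines the injective left-hand homomorphism $\prod_{v\in V}\Aut_\Groups(H_v)\hookrightarrow\Aut_\Part(\Mb(G,\Hc))$ whose image is precisely this kernel. Exactness at the two displayed positions follows.

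The genuinely load-bearing input is Theorem \ref{thm:M_is_embedding}; within the present argument the only points requiring care are the identification $\Aut_\Part=\Aut_\InjPart$ and the upgrade of the $f_v$ from injections to isomorphisms. I would also stress why the sequence is not capped by a trailing $\to 1$: a graph automorphism $f_G$ lifts to $\Aut_\Part(\Mb(G,\Hc))$ only if $H_v\cong H_{f_G(v)}$ for all $v$, so the right-hand map need not be surjective. A transparent witness is a single edge whose two endpoints are decorated by non-isomorphic finite groups: the nontrivial graph automorphism swapping the endpoints does not lift, showing the cokernel can be nontrivial.
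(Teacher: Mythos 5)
Your proof is correct and follows essentially the same route as the paper's: both reduce to the full faithfulness of $\Mb(-,-)$ (Theorem \ref{thm:M_is_embedding}) after observing that automorphisms have trivial, hence torsion-free, kernel, and both then read off the exact sequence from the resulting identification of $\Aut_\Part\big(\Mb(G,\Hc)\big)$ with $\Aut_\FinDecGraphs(G,\Hc)$. Your additional observations (upgrading the $f_v$ to isomorphisms via the two-sided inverse relations, and the non-surjectivity witness) only make explicit what the paper defers to Remarks \ref{rmk:section} and \ref{rmk:exact_not_short}.
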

\begin{proof}
Because automorphisms have trivial kernel, we have that
\[
\Aut_\Part\big(\Mb(G,\mathcal{H})\big)=\Aut_\InjPart\big(\Mb(G,\mathcal{H})\big).
\]
Then, by Theorem \ref{thm:M_is_embedding}, we have a bijection
\[
\xymatrix@R=5pt{
\Aut_\FinDecGraphs(G,\Hc)\ar[r]&\Aut_\InjPart\big(\Mb(G,\mathcal{H})\big)\\
(f_G,f_\Hc)\ar@{|->}[r]&\Mb(f_G,f_\Hc).
}
\]
	We define the map $\Aut_\Part\big(\Mb(G,\mathcal{H})\big)\to \Aut_\Graphs(G)$ in the statement by sending an automorphism of the partial group $\Mb(G,\mathcal{H})$ to its corresponding graph automorphism $f_G$. If $f_G=\id_G$ then, for all $v\in V$, $f\vert_{H_v}$ is an automorphism of $H_v$. Finally, any collection $(f_v)_{v\in V}\in \prod_{v\in V}\Aut(H_v)$ induces, via the bijection above, the partial group automorphism,
\[
\Mb(id_G,\{f_v\}_{v\in V})\colon \Mb(G,\mathcal{H})\to\Mb(G,\mathcal{H}),
\]
and the corresponding map $\prod_{v\in V}\Aut(H_v)\to \Aut_\Part\big(\Mb(G,\mathcal{H})\big)$ is injective.
\end{proof}

\begin{remark}\label{rmk:exact_not_short}
Observe that although the sequence \eqref{eq:aut_FinDecGraphs} above is exact, it may not be short exact. For example, let $K_2$ be the complete graph on two vertices, labeled $a$ and $b$, and let $\Hc$ be the collection of finite groups given by $H_a=\Z_2$ and $H_b=\Z_3$. Then $\Aut_\Groups(H_a)=\{1\}$, $\Aut_\Groups(H_b)=\Z_2$ and $\Aut_\Graphs(K_2)=\Z_2$. The cliques in $K_2$ are $\Delta=\{\{a\},\{b\},\{a,b\}\}$ and we have
\begin{align*}
\CR(\{a\})=&\{(a)\}\text{, }\CR(\{b\})=\{(b),(b^2)\}\text{, and }\\
\CR(\{a,b\})=&\{(a),(b),(b^2)\}\cup \{(a,b^{\epsilon_1},\ldots,a,b^{\epsilon_n}),n\geq 0,\epsilon_i\in\{1,2\}\}\cup\\
& \{(b^{\epsilon_1},a,\ldots,b^{\epsilon_n},a),n\geq 0,\epsilon_i\in\{1,2\}\}.
\end{align*}
where we have identified the vertices with the element of order 2 or 3 in the associated decorating group. From here, an easy calculation shows that the only non-trivial automorphism of $\Mb(G,\mathcal{H})$ is given by $a\mapsto a,b\mapsto b^2,b^2\mapsto b$. Hence $\Aut_\Part\big(\Mb(G,\mathcal{H})\big)=\Z_2$ and the sequence \eqref{eq:aut_FinDecGraphs} becomes
\[
1\to \Z_2\to \Z_2\to \Z_2,
\]
which cannot be exact.
\end{remark}

\begin{remark}\label{rmk:section}
A particular case for which the sequence \eqref{eq:aut_FinDecGraphs} is short exact is given by the condition
\[
H_v = H_{v'}\text{ for all $v,v'\in V$.}
\]
Under this condition, assuming the hypotheses in Theorem \ref{thm:aut_of_partial}, and writing $A$ for the common group $H_v$ for any $v\in V$, the map
\[
\xymatrix@R=5pt{
\Aut_\Graphs(G)\ar[r]&\Aut_\Part\big(\Mb(G,\mathcal{H})\big)\overset{\cong}\to \Aut_\FinDecGraphs(G,\Hc)\\
f_G\ar@{|->}[r]&(f_G,\{\id_A\colon A_v\to A_{f_G(v)}\}_{v\in V})
}
\]
is a section of the map $\Aut_\Part\big(\Mb(G,\mathcal{H})\big)\to \Aut_\Graphs(G)$ in \eqref{eq:aut_FinDecGraphs}. Hence the sequence is short exact. In addition, we have the wreath product decomposition

\begin{align*}
\Aut_\Part\big(\Mb(G,\mathcal{H})\big)&\cong \overbrace{\Aut(A)\times\ldots\times \Aut(A)}^{|V|\text{ times}} \rtimes \Aut_\Graphs(G)\\
&\cong \Aut(A) \wr \Aut_\Graphs(G),
\end{align*}
where we have used that every injective endomorphism of a finite group is an automorphism.
\end{remark}

\section{Path partial groups}\label{sec:path_partial}

In this section, we discuss how the algebraic structure of the partial group associated to a decorated undirected graph (Definition \ref{def:DM}) is related to path concatenation in the path algebra corresponding to a quiver. The main difference is that in the path algebra two non-composable paths multiply to $0$, while in our construction such a product does not exist. We first consider the following functor,
\[
\xymatrix@R=5pt{
\Graphs \ar[r] & \FinDecGraphs\\
G=(V,E)\ar@{|->}[r] & (G,\{\Z_2\}_{v\in V}).
}
\]
So a graph is sent to itself decorated with the group $\Z_2$ on each vertex. On morphisms, the graph homomorphism $f_G\colon G\to G'$ is sent to the morphism
\[
(f_G,\{\id_{\Z_2}\colon (\Z_2)_v\to (\Z_2)_{f_G(v)}\}_{v\in V}).
\]
Below we prove that the category $\Graphs$ embeds into the category $\InjPart$.

\begin{lemma}\label{lem:graphs_embeds}
The composite functor
\[
\xymatrix{
\Graphs \ar[r]&\FinDecGraphs\ar[rr]^{\Mb(-,-)}&&\InjPart,
}
\]
is full and faithful.
\end{lemma}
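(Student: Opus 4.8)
The plan is to build on the general embedding result already established in Theorem \ref{thm:M_is_embedding}. That theorem tells us $\Mb(-,-)\colon\FinDecGraphs\to\InjPart$ is full and faithful, so the composition in question will be full and faithful as soon as I show that the left-hand functor $\Graphs\to\FinDecGraphs$, $G\mapsto(G,\{\Z_2\}_{v\in V})$, is itself full and faithful; a composition of full and faithful functors is full and faithful. Thus the entire content of the lemma reduces to analysing this decoration functor, which is a much smaller task than re-deriving the partial-group machinery.

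For faithfulness, I would observe that a graph homomorphism $f_G\colon G\to G'$ is sent to $(f_G,\{\id_{\Z_2}\}_{v\in V})$, and the first coordinate of this pair is literally $f_G$; hence two distinct graph morphisms produce distinct decorated-graph morphisms, and the functor is injective on each hom-set. For fullness, I would take an arbitrary morphism $(f_G,f_\Hc)\colon(G,\{\Z_2\}_v)\to(G',\{\Z_2\}_v)$ in $\FinDecGraphs$ and argue that it must lie in the image. By Definition \ref{def:decgraphs}, each component $f_v\colon\Z_2\to\Z_2$ is an \emph{injective} group homomorphism; but the only injective endomorphism of $\Z_2$ is the identity, so automatically $f_v=\id_{\Z_2}$ for every $v\in V$. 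Consequently $(f_G,f_\Hc)=(f_G,\{\id_{\Z_2}\}_v)$ is exactly the image of the graph homomorphism $f_G$, proving fullness. Here the key structural fact is simply that $\Z_2$ has trivial automorphism group and no nontrivial injective self-maps as a group.

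I do not anticipate a genuine obstacle, since both halves rest on the elementary rigidity of $\Z_2$ together with the injectivity constraint built into the morphisms of $\DecGraphs$. The one point deserving care is confirming that the decoration functor is genuinely well defined as a functor into $\FinDecGraphs$: I should check that the assignment respects identities and composition, which is immediate because $f_G$ composes as graph homomorphisms do and the decoration components are all identities, so their composites remain identities. Once this is verified, faithfulness and fullness follow as above, and composing with the full and faithful $\Mb(-,-)$ from Theorem \ref{thm:M_is_embedding} yields the claim. In effect, this lemma is the specialisation of Theorem \ref{thm:aut_of_partial} and Remark \ref{rmk:section} to the case $A=\Z_2$, where $\Aut(A)=\{1\}$ collapses the wreath product and leaves only $\Aut_\Graphs(G)$, foreshadowing Theorem \ref{thm:introduction_thmC}.
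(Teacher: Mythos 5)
Your argument is correct and matches the paper's proof essentially verbatim: both reduce via Theorem \ref{thm:M_is_embedding} to showing the decoration functor $G\mapsto (G,\{\Z_2\}_{v\in V})$ induces bijections on hom-sets, which follows because the only injective group homomorphism $\Z_2\to\Z_2$ is the identity. No gaps.
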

\begin{proof}
Because of Theorem \ref{thm:M_is_embedding}, it is enough to prove that for any two graphs $G$ and $G'$ in $\Graphs$, the map
\[
\xymatrix@R=5pt{
\Mor_\Graphs(G,G')\ar[r] & \Mor_\FinDecGraphs((G,\{\Z_2\}_{v\in V}),(G',\{\Z_2\}_{v\in V}))\\
f_G\ar@{|->}[r]& (f_G,\{\id_{\Z_2}\colon (\Z_2)_v\to (\Z_2)_{f_G(v)}\}_{v\in V})
}
\]
is a bijection, and this is straightforward as the only injective group homomorphism $\Z_2\to \Z_2$ is the identity.
\end{proof}

In the next three paragraphs, we provide an informal explanation of how the construction of path algebras inspired the construction of the path partial group $$\Pb(G)=\Mb(G,\{\Z_2\}_{v\in V})$$ for an undirected graph $G$. To justify this notation recall that, by Equation \eqref{eq:S}, the elements in the alphabet $\Mc=\Mc(G,\{\Z_2\}_{v\in V})$ are the cyclically reduced words $u=(g_1,g_2,\ldots,g_r)\in \CR(C)$ for some clique $C=\{v_1,\ldots,v_n\}$ of $G$, with $g_i\in v_{i_j}$ for $i=1,\ldots,r$. As the elements $g_i$ must be non-trivial elements of $\Z_2$, the cyclically reduced word $u$ is completely determined by the sequence of vertices
\begin{equation}\label{equ:crword_is_sequence_of_vertices}
(v_{i_1},v_{i_2},\ldots,v_{i_r}).
\end{equation}
As all these vertices belong to the clique $C$, this sequence can be thought of as the path $(\{{v_{i_1}},v_{i_2}\},\ldots,\{v_{i_{r-1}},v_{i_r}\})$ in $G$. For $r=1$, we have the path consisting of a single vertex, $(v_{i_1})$. Thus elements in $\Mc$ can be seen as \emph{paths} lying in some clique of $G$ and with $1$ or more vertices.

By Definition \ref{def:PG}.\ref{cond:P3}, there are inverses in every partial group. In particular, the inverse of a vertex should be itself, and this explains the choice of $\Z_2$ in the embedding $\Graphs\to \FinDecGraphs$. Moreover, the inverse of a path should be the reversed path and thus, if $G$ were directed, it should contain the reversed arrow of each arrow. This is why we ask $G$ to be an undirected graph. As described above, we only consider paths in $G$ that are contained in some clique of $G$. The reason for this is that the existence of inverses imply cancellations. For instance, assume that we have two paths in $G$ given by $(a,b)$ and $(b,c)$, where $a,b,c$ are vertices of $G$.  If we multiply them, we would obtain
\[
\Pi((a,b)\circ(b,c))=\Pi(a,b,b,c)=(a,c),
\]
where the two consecutive elements $b$ cancel each other as they correspond to the same element in $\Z_2$. Then $(a,c)$ should be a path in $G$ and the edge $\{a,c\}$ should be contained in $G$. Hence, paths that may be multiplied should have vertices contained in some clique of $G$ and, in addition, multiplication is path concatenation with the extra feature that two consecutive identical vertices produce the empty path.

Finally, we wanted $\Pb(G)$ to have partial group automorphisms ``close'' to the graphs automorphisms of $G$. In particular, the inner automorphism given by conjugation by any vertex should not produce an automorphism of $\Pb(G)$, since inner automorphisms are what prevents the category $\Groups$ to be universal \cite{outer}. This explains why we only consider paths in $G$ which are not closed or, equivalently, words in $\Mc$ that are cyclically reduced.

That the constraints we have imposed are correct is supported by the following results.

\begin{theorem}\label{thm:aut_of_partial_Z2}
Let $G\in Ob(\Graphs)$. Then
\[
\Aut_\Part\big(\Pb(G))\cong\Aut_\Graphs(G).
\]
\end{theorem}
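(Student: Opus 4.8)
The plan is to obtain the isomorphism as an immediate corollary of the work already carried out, with essentially no new computation, by exploiting the fact that the decorating group $\Z_2$ has no nontrivial automorphisms. The cleanest route is through Lemma \ref{lem:graphs_embeds}. Observe that $\Pb(G)=\Mb(G,\{\Z_2\}_{v\in V})$ is precisely the image of $G$ under the composite functor $\Graphs \to \FinDecGraphs \xrightarrow{\Mb(-,-)} \InjPart$ appearing there, which is full and faithful. A full and faithful functor reflects isomorphisms and induces, for each object, a composition-preserving bijection between endomorphism monoids; restricting this bijection to the invertible elements yields a group isomorphism $\Aut_\Graphs(G)\cong\Aut_\InjPart(\Pb(G))$. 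Thus the entire content reduces to identifying $\Aut_\InjPart(\Pb(G))$ with $\Aut_\Part(\Pb(G))$.

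For that identification I would argue exactly as in the proof of Theorem \ref{thm:aut_of_partial}: any automorphism of a partial group is an isomorphism in $\Part$, hence has trivial kernel, and a trivial kernel has no torsion. Consequently both an automorphism and its inverse are morphisms of $\InjPart$, so every element of $\Aut_\Part(\Pb(G))$ already lies in $\Aut_\InjPart(\Pb(G))$; the reverse inclusion is immediate since $\InjPart$ is a wide subcategory of $\Part$. Combining this equality with the isomorphism of the previous paragraph completes the chain $\Aut_\Part(\Pb(G))=\Aut_\InjPart(\Pb(G))\cong\Aut_\Graphs(G)$.

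Equivalently, one can invoke Theorem \ref{thm:aut_of_partial} directly: with $H_v=\Z_2$ for every $v$ its exact sequence specialises to $1\to \prod_{v\in V}\Aut_\Groups(\Z_2)\to \Aut_\Part(\Pb(G))\to \Aut_\Graphs(G)$, and since $\Aut_\Groups(\Z_2)$ is trivial the left-hand term vanishes, giving injectivity of the right-hand map. Surjectivity (equivalently, short-exactness) then follows from Remark \ref{rmk:section}, whose common-group hypothesis $H_v=H_{v'}$ is satisfied here; the wreath-product decomposition recorded there collapses to $\Aut(\Z_2)\wr\Aut_\Graphs(G)=\Aut_\Graphs(G)$. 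I do not anticipate any genuine obstacle: the only points that require a moment's care are verifying that the automorphism groups computed in $\Part$ and in $\InjPart$ coincide, and that the full-and-faithful bijection on morphism sets restricts to a group isomorphism on automorphisms — both of which are formal consequences of the definitions already in place.
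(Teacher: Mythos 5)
Your proposal is correct and follows essentially the same route as the paper: the second paragraph (triviality of $\Aut_\Groups(\Z_2)$ giving injectivity via the exact sequence of Theorem \ref{thm:aut_of_partial}, plus the section of Remark \ref{rmk:section} coming from Lemma \ref{lem:graphs_embeds}) is precisely the argument given there, and your first route via full-faithfulness of the composite functor together with the identification $\Aut_\Part\big(\Pb(G)\big)=\Aut_\InjPart\big(\Pb(G)\big)$ is only a cosmetic repackaging of the same ingredients. No gaps.
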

\begin{proof}
Since $\Aut(\Z_2)$ is trivial, the exact sequence \eqref{eq:aut_FinDecGraphs} in Theorem \ref{thm:aut_of_partial} gives rise to an injective homomorphism  $\Psi\colon\Aut_\Part(\Pb(G))\to \Aut_\Graphs(G)$. In addition, the composite functor in Lemma \ref{lem:graphs_embeds} induces a group homomorphism
\[
\Phi\colon \Aut_\Graphs(G) \to \Aut_\Part(\Pb(G)),
\]
which is nothing but the section of \eqref{eq:aut_FinDecGraphs} constructed in Remark \ref{rmk:section}, hence $\Psi\circ\Phi=\id_{\Aut_\Graphs(G)}$. Therefore, they are both isomorphisms inverse to each other.
\end{proof}

\begin{theorem}\label{thm:theorem_A}
The category $\Part$ is universal. Moreover, given an abstract group $H$ there exist infinitely many non isomorphic partial groups $\Mb$ such that $\Aut_\Part(\Mb)\cong H$.
\end{theorem}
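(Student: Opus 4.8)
The plan is to reduce the whole statement to the already-known universality of $\Graphs$ by means of Theorem \ref{thm:aut_of_partial_Z2}. For the bare universality claim, I would invoke the realizability results for graphs cited in the introduction: by \cite{Fr39,Gr,Sa}, given an abstract group $H$ there is a simple undirected graph $G$ with $\Aut_\Graphs(G)\cong H$. Feeding such a $G$ into Theorem \ref{thm:aut_of_partial_Z2} yields $\Aut_\Part\big(\Pb(G)\big)\cong\Aut_\Graphs(G)\cong H$, so $\Part$ is universal. Note this works uniformly whether $H$ is finite or infinite, since the construction of $\Pb(G)$ in Section \ref{sec:graphs} is valid for graphs of arbitrary cardinality.

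For the refinement to \emph{infinitely many} non-isomorphic partial groups, I would upgrade the input coming from $\Graphs$: the aim is a family $\{G_i\}_{i\in\N}$ of pairwise non-isomorphic graphs with $\Aut_\Graphs(G_i)\cong H$ for every $i$. The cleanest route is to quote the strong form of universality of $\Graphs$ (realizability of $H$ in every sufficiently large cardinality) available in \cite{Sa}, which immediately supplies such a family by letting the $G_i$ have distinct cardinalities. A self-contained alternative is to start from one realizing graph $G$ and manufacture infinitely many by a size-increasing modification that does not alter the automorphism group, for instance uniformly subdividing every edge $k$ times for each $k\ge 2$, or grafting a rigid tail of increasing length at a vertex fixed by every graph automorphism. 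Either way, applying Theorem \ref{thm:aut_of_partial_Z2} to each member gives $\Aut_\Part\big(\Pb(G_i)\big)\cong H$.

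It then remains to see that the partial groups $\Pb(G_i)$ are pairwise non-isomorphic, and this is exactly where the embedding set up in Section \ref{section:embedding} is used. Any isomorphism of partial groups is a bijection, hence has trivial (in particular torsion-free) kernel, so $\Pb(G)\cong\Pb(G')$ in $\Part$ if and only if they are isomorphic in $\InjPart$. By Lemma \ref{lem:graphs_embeds} the composite functor $\Graphs\to\InjPart$ sending $G\mapsto\Pb(G)$ is full and faithful, and a full and faithful functor reflects isomorphisms; therefore $\Pb(G_i)\cong\Pb(G_j)$ forces $G_i\cong G_j$. Since the $G_i$ are pairwise non-isomorphic, so are the $\Pb(G_i)$, completing the argument.

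The main obstacle is the \emph{infinitely many} clause, i.e.\ certifying that $H$ is realized by infinitely many non-isomorphic graphs. Quoting the cardinality-refined version of \cite{Sa} makes this painless; the do-it-by-hand variant instead shifts the difficulty to verifying that the chosen modification genuinely preserves $\Aut_\Graphs$ (one must check that subdivision, or the grafted tail, introduces no spurious symmetries), which is routine but does require a little care. The reduction via Theorem \ref{thm:aut_of_partial_Z2} and the reflection-of-isomorphisms argument are both immediate once the right statement about $\Graphs$ is in hand.
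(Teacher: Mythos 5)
Your proposal is correct and follows essentially the same route as the paper: quote \cite{Fr39,Gr,Sa} for infinitely many pairwise non-isomorphic graphs realizing $H$, apply Theorem \ref{thm:aut_of_partial_Z2} to compute the automorphism groups, and use the full and faithful functor of Lemma \ref{lem:graphs_embeds} to see that non-isomorphic graphs yield non-isomorphic partial groups. The only difference is that you spell out the ``infinitely many'' input and the reflection-of-isomorphisms step in more detail than the paper, which simply cites the references and Lemma \ref{lem:graphs_embeds} for these points.
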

\begin{proof}
Let $H$ be an abstract group. According to \cite{Fr39, Gr, Sa}, there exists infinitely many non isomorphic simple graphs $G=(V,E)$, such that $\Aut_\Graphs(G)\cong H$. Then, these non isomorphic graphs $G$ give rise to partial groups $\Pb(G)$ which are not isomorphic by Lemma \ref{lem:graphs_embeds}, and such that $\Aut_\Part(\Pb(G))\cong H$ by Theorem \ref{thm:aut_of_partial_Z2}.
\end{proof}

A rigid object in a category $\C$ is an object $C$ such that $\Aut_\C(C)=\{1\}$. Observe that, while there exists just one non-trivial rigid object in $\Groups$, i.e., $\Z_2$ is the only non-trivial group $K$ such that $\Aut_\Groups(K)=\{1\}$, the situation in $\Part$ is drastically different: Theorem \ref{thm:theorem_A} ensures that there exist infinitely many non-trivial rigid partial groups. 

\section{The graph of maximal locally finite subgroups of a partial group}\label{sec:maximalsub}

A group $H$ is called \emph{locally finite} if every finitely generated subgroup of $H$ is finite. In this section we construct a graph out of the maximal locally finite subgroups of a given partial group. Recall that a subgroup of a partial group is a partial subgroup which is itself a group, see Definition \ref{def:partial_subgroup_and_subgroup}. This will allow us to show that, given a simple undirected graph $G=(V,E)$ and a collection of non-trivial locally finite groups $\mathcal{H}:=\{H_v\}_{v\in V}$, $G$ can be fully recovered (up to isomorphism) from the algebraic structure of $\Mb(G,\Hc)$.

We first prove that in a partial group there is at least a maximal locally finite subgroup.

\begin{lemma}
Let $\Mb=\left(\Mc,\Db,\Pi,(-)^{-1}\right)$ be a partial group.
Then the poset of the locally finite subgroups of $\Mb$ ordered by inclusion has a maximal element.
\end{lemma}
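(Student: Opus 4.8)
The plan is to invoke Zorn's Lemma on the poset $\mathcal{L}$ of locally finite subgroups of $\Mb$ ordered by inclusion. First I would observe that $\mathcal{L}$ is non-empty: the trivial subset $\{1\}$ is a subgroup in the sense of Definition \ref{def:partial_subgroup_and_subgroup} (it is closed under inversion, every word on it lies in $\Db$ with product $1$), and being finite it is trivially locally finite. The substance of the argument is therefore to show that every chain in $\mathcal{L}$ admits an upper bound, for which the natural candidate is the union.

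Next I would take a chain $\{N_\alpha\}_{\alpha}$ in $\mathcal{L}$ and set $N=\bigcup_\alpha N_\alpha$, and verify that $N$ is a subgroup of $\Mb$. Closure under inversion is immediate since each $N_\alpha$ is. The key structural point is that any word $w=(x_1,\dots,x_n)\in\Wb(N)$ is \emph{finite}: each letter $x_i$ lies in some $N_{\alpha_i}$, and because the chain is totally ordered there is a single $N_\beta$ among $N_{\alpha_1},\dots,N_{\alpha_n}$ containing all of them. Then $w\in\Wb(N_\beta)\subseteq\Db$ (as $N_\beta$ is a subgroup) and $\Pi(w)\in N_\beta\subseteq N$, so $\Wb(N)\subseteq\Db$ and $\Pi\big(\Wb(N)\big)\subseteq N$, establishing that $N$ is a subgroup.

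It then remains to check that $N$ is locally finite. Given a finitely generated subgroup $K\leq N$ with generators $x_1,\dots,x_n\in N$, the same chain argument places all generators in a common member $N_\beta$. Since $N_\beta$ is a genuine group, the subgroup $K'$ it generates by $x_1,\dots,x_n$ is finite by local finiteness of $N_\beta$; and because $N_\beta$ is a subgroup of $\Mb$, $K'$ is itself a subgroup of $N$ containing the generators, whence $K=K'$ is finite. Thus $N\in\mathcal{L}$ is an upper bound for the chain, and Zorn's Lemma yields a maximal locally finite subgroup.

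The step I expect to require the most care is the last one: confirming that the subgroup of $N$ generated by a finite set coincides with the subgroup generated inside a single chain member $N_\beta$, i.e.\ that \emph{generation is local}. This hinges precisely on the finiteness of generating words together with total ordering of the chain, and on the fact (Lemma \ref{lem:subgroup_to_subgroup} and Definition \ref{def:partial_subgroup_and_subgroup}) that a subgroup of the group $N_\beta$ is again a subgroup of $\Mb$; once this is in place the local finiteness of $N$ follows cleanly from that of each $N_\beta$.
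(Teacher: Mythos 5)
Your proof is correct and follows essentially the same route as the paper's: both verify non-emptiness via the trivial subgroup and apply Zorn's Lemma, taking the union of a chain as the upper bound and using the finiteness of words/generating sets to reduce to a single chain member. Your version merely spells out in more detail the verification that the union is a subgroup of the partial group and that local finiteness passes to the union, steps the paper treats as immediate.
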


\begin{proof}
Let $\X$ be the set of locally finite subgroups of $\Mb$. Notice that $\X$ is not empty since $\{1\}$ is a locally finite subgroup of $\Mb$. If $H_1\leq H_2\leq \cdots$ is an increasing chain of locally finite subgroups of $\Mb$, consider the subgroup $H_\infty=\bigcup_{i\in \N} H_i\leq \Mc$. By construction, every finitely generated subgroup of $H_\infty$ lies in some $H_i$ for $i$ big enough. Therefore, $H_\infty$ is a locally finite subgroup of $\Mb$ and thus an upper bound for the given chain of elements of $\X$. Applying  Zorn's Lemma, we obtain that $\X$ has a maximal element.
\end{proof}

\begin{definition}\label{def:maxsubgraph}
Let $\Mb=\left(\Mc,\Db,\Pi,(-)^{-1}\right)$.
The \emph{maximal locally finite subgroup graph} of $\Mb$ is the graph $\MS(\Mb)$ with vertices the set of non-trivial maximal locally finite subgroups of $\Mb$ and declaring two \emph{distinct} vertices $H_1$ and $H_2$ to be adjacent if and only if there exist $h_1\in H_1^*$ and $h_2\in H_2^*$ such that $(h_1,h_2)\in \Db $.
\end{definition}

\begin{remark}\label{rmk:MS_well_defined}
Notice that $\MS(\Mb)$ is a well-defined simple undirected graph. Indeed, given two different non-trivial maximal locally finite subgroups $H_1$ and $H_2$ of $\Mb$ and two elements $h_1\in H_1^*$ and $h_2\in H_2^*$, we have
\[
(h_1,h_2)\in \Db\text{ if and only if }(h_2^{-1},h_1^{-1})\in \Db.
\]
Thus, the adjacency relation considered is symmetric.
\end{remark}

If $\Mb$ has no non-trivial subgroups, then the graph $\MS(\Mb)$ contains only the trivial subgroup, and Example \ref{ex:freepargp} provides an example of such situation. In Definition \ref{def:maxsubgraph}, we look at locally finite subgroups instead of at subgroups because in a path partial group every edge give rise to a copy of $\Z$. This is demonstrated in Example \ref{example:K2_K3} below.

\begin{example}\label{example:K2_K3}
Let $K_2$ be the complete graph on two vertices, labeled $a$ and $b$. Then the Hasse diagram of the poset of cliques $\Delta$ of $K_2$ is the following,
\[
\xymatrix@R=10pt{
&\{a,b\}\\
\{a\}\ar@{-}[ru]&&\{b\}.\ar@{-}[lu]
}
\]
Recall that $\Pb(K_2)=\left(\Mc,\Db,\Pi,(-)^{-1}\right)$ is the colimit in the category $\Part$ of the partial groups
\[
\Mb(C)=(\CR(C),\Db(C),\Pi(C),(-)_C^{-1})\]
for $C\in \Delta$. Notice that
\begin{align*}
&\CR(\{a\})=\{(a)\}\text{, }\CR(\{b\})=\{(b)\}\text{, and }\\
&\CR(\{a,b\})=\{(a),(b)\}\cup \{\underbrace{(a,b,\ldots,a,b)}_{2n},n\geq 0\}\cup \{\underbrace{(b,a,\ldots,b,a)}_{2n},n\geq 0\},
\end{align*}
where we have identified the vertices with the element of order 2 in the associated decorating group.
The maximal subgroups in $\Pb(K_2)$ are $\langle (a)\rangle\cong \Z_2$, $\langle (b)\rangle\cong \Z_2$, and $\langle(a,b)\rangle\cong \Z$. Only two of these three subgroups are locally finite and it is straightforward that $\MS(\Pb(K_2))$ is the complete graph on two vertices.
\end{example}

There are alternative definitions for the \emph{maximal locally finite subgroup graph} associated to a partial group that make sense but that we do not employ here. For instance, one could consider the same set of vertices as that in Definition \ref{def:maxsubgraph} but endowed with the ``stronger'' notion of adjacency described as follows: $H_1$ and $H_2$ are adjacent if and only if \emph{for all} $h_1\in H_1^*$ and \emph{for all} $h_2\in H_2^*$, $(h_1,h_2)\in \Db$. In the particular case of path partial groups both definitions agree, but in general non-isomorphic graphs are obtained as the next example shows.

\begin{example}\label{example:D_8}
Consider the dihedral group of size $8$,
\[
D_8=\langle x,t \;|\; x^4=1,t^2=1,txt=x\rangle,
\]
its center, and its two Klein four-groups,
\[
Z=Z(D_8)=\langle x^2 \rangle \text{, }V=\langle x^2,t\rangle \text{ and }V'=\langle x^2,tx\rangle.
\]
Define the partial group $\Mb=\left(\Mc,\Db,\Pi,(-)^{-1}\right)$ as the colimit in the category $\Part$ (see Section \ref{sec:partials}) of the diagram given by inclusions among these three subgroups,
\[
\Mb= \mathop{colim}_{\Part} (V\leftarrow Z\rightarrow V').
\]
Then the set of maximal locally finite subgroups of $\Mb$ is exactly $\{V,V'\}$. Moreover, the word $(x^2,x^2)$ belongs to $\Db$  and hence the graph $\MS(\Mb)$ is connected. Nevertheless, the word $(t,tx)$ does not belong to $\Db$ and hence the graph defined using the ``stronger'' condition in the paragraph above is not connected. In particular, the partial group $\Mb$ cannot be isomorphic to a partial group of the form $\Mb(G,\Hc)$ for any pair $(G,\Hc)\in \DecGraphs$.
\end{example}

\begin{lemma}\label{lem:locmassub}
Let  $G=(V,E)$ be a simple undirected graph and $\mathcal{H}:=\{H_v\}_{v\in V}$ be a collection of non-trivial locally finite groups. Then, for every non-trivial locally finite subgroup $H$ of $\Mb(G,\Hc)$, there exist a unique $v\in V$ such that $H\leq H_v$. In particular, the set of maximal locally finite subgroups of $\Mb(G,\mathcal{H})$ is $\Hc$.
\end{lemma}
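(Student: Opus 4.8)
The plan is to prove that every non-trivial locally finite subgroup $H$ of $\Mb(G,\Hc)$ is contained in a unique decorating group $H_v$, and then deduce the statement about maximal subgroups. The strategy mirrors the proof of Lemma \ref{lem:1}.\ref{lem:1b}, but it must be adapted to the locally finite (rather than finite) setting, so the key point is to reduce everything to the behaviour of single elements. First I would analyze an arbitrary non-trivial element $u\in\Mc(G,\Hc)$ of finite order. By the identification \eqref{eq:S}, $u$ is a cyclically reduced word $(g_1,\ldots,g_r)$, and the computation in Lemma \ref{lem:1}.\ref{lem:1b} shows that $|\Pi(u^{\circ n})|=n|u|$ for all $n$; hence if $u$ has finite order it must satisfy $|u|=1$, so $u=(g)$ for a single non-trivial element $g\in H_v^*$ of some decorating group.

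Next I would upgrade this from single elements to the whole subgroup. Since $H$ is locally finite, every element of $H$ has finite order, so by the previous step every element of $H\smallsetminus\{1\}$ lies in some $H_v$ as a length-one word. It remains to show that a single vertex $v$ works for all elements simultaneously. For this I would take two non-trivial elements $k\in H_v^*$ and $k'\in H_{v'}^*$ with $v\neq v'$; the subgroup they generate in $H$ is finitely generated, hence finite by local finiteness. But the argument in Lemma \ref{lem:1}.\ref{lem:1b} shows that $(k,k')$ is a cyclically reduced word of length $2$, so it is an element of infinite order in $\Mc(G,\Hc)$, contradicting finiteness of the generated subgroup. Therefore all non-trivial elements of $H$ share a common vertex $v$, giving $H\leq H_v$; uniqueness of $v$ is immediate since the $H_v$ are pairwise disjoint away from the identity.

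Finally, for the concluding assertion about maximal locally finite subgroups, I would combine the containment just established with the fact, from Lemma \ref{lem:1}.\ref{lem:1a}, that each $H_v$ is itself a subgroup of $\Mb(G,\Hc)$ and is locally finite by hypothesis. Every non-trivial locally finite subgroup sits inside some $H_v$, so each $H_v$ is a maximal locally finite subgroup, and no other maximal ones can exist; hence the set of maximal locally finite subgroups is exactly $\Hc=\{H_v\}_{v\in V}$.

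The main obstacle I anticipate is the passage from individual torsion elements to the subgroup as a whole: unlike the finite case treated in Lemma \ref{lem:1}, a locally finite group need not be finite, so one cannot directly invoke finiteness of $H$. The essential device is to exploit that \emph{finitely generated} subgroups are finite, applied to the two-generator subgroup $\langle k,k'\rangle$, turning the potential obstruction into precisely the contradiction with the infinite order of the length-two word $(k,k')$. Everything else reduces to the length/order bookkeeping already present in the excerpt.
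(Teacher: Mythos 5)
Your proof is correct and takes essentially the same route as the paper's: both arguments reduce the locally finite case to the finite one by observing that the two-generator subgroup $\langle k,k'\rangle\leq H$ is finite, and then invoke the content of Lemma \ref{lem:1}.\ref{lem:1b} to force a single common vertex. The only cosmetic difference is that you re-derive the length/order computation (torsion elements are length-one words; length-two cyclically reduced words have infinite order) inline, where the paper simply cites Lemma \ref{lem:1}.\ref{lem:1b} as a black box.
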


\begin{proof}
Let $H\leq \Mb(G,\mathcal{H})$ be a non-trivial locally finite subgroup. Then $H$ contains at least a non-trivial finite group $K$ and, by Lemma \ref{lem:1}.\ref{lem:1b}, there exists a vertex $v\in V$ such that $K\leq H_v$. Hence $1\neq K \leq H_v\cap H$. Now assume that there are vertices $v_1$ and $v_2$ in $V$ such that $H_{v_1}\cap H$ and $H_{v_2}\cap H$ are non-trivial. Then we may choose $h_1\in (H_{v_1}\cap H)^*$ and $h_2\in (H_{v_2}\cap H)^*$ and consider the finite subgroup $\langle h_1,h_2\rangle\leq H$ of $\Mb(G,\mathcal{H})$. Then, by Lemma \ref{lem:1}.\ref{lem:1b} again, there exists a unique vertex $v\in V$ such that $\langle h_1,h_2\rangle\leq H_v$. Since $\langle h_1\rangle\leq H_{v_1}$ is a non-trivial finite subgroup which is contained in both $H_{v_1}$ and $H_v$, the uniqueness in Lemma \ref{lem:1}.\ref{lem:1b} gives that $v_1=v$. The analogous argument applied to $h_2$ shows that $v_2=v$ and hence $v_1=v_2$. This proves the first part of the lemma.

For the second part, notice that for every $v\in V$, $H_v$ is a non-trivial locally finite subgroup of $\Mb(G,\mathcal{H})$ by Lemma \ref{lem:1}.\ref{lem:1a}. Therefore, by what we just proved in the paragraph above, the $H_v$'s are maximal locally finite subgroups of $\Mb(G,\mathcal{H})$ and every maximal locally finite subgroup of $\Mb(G,\mathcal{H})$ is of this form
\end{proof}

\begin{theorem}\label{thm:maximalsub}
Let $G=(V,E)$ be a simple undirected graph and $\mathcal{H}:=\{H_v\}_{v\in V}$ be a collection of non-trivial locally finite groups.
Then $\MS\big(\Mb(G,\Hc)\big)\cong G$.
\end{theorem}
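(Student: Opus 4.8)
The plan is to exhibit the vertex bijection $v \mapsto H_v$ and then check that it is compatible with the edge relations on both sides; the structural work has essentially already been carried out in the preceding lemmas. Lemma \ref{lem:locmassub} identifies the vertex set of $\MS(\Mb(G,\Hc))$—namely the non-trivial maximal locally finite subgroups of $\Mb(G,\Hc)$—with the collection $\Hc=\{H_v\}_{v\in V}$. So I would first define $\Phi\colon V\to V(\MS(\Mb(G,\Hc)))$ by $v\mapsto H_v$ and argue that it is a bijection. Surjectivity is precisely the final assertion of Lemma \ref{lem:locmassub}. For injectivity, I would observe that, under the identification of Equation \eqref{eq:S}, a length-one word $(g)$ with $g\in H_v^*$ lives in a different free factor than one coming from $H_{v'}^*$, so that $H_v\cap H_{v'}=\{1\}$ whenever $v\neq v'$; since each $H_v$ is non-trivial by hypothesis, this forces $H_v\neq H_{v'}$ as subsets of $\Mc(G,\Hc)$, and hence distinct vertices yield distinct subgroups.

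Second, I would verify that $\Phi$ preserves and reflects adjacency. By Definition \ref{def:maxsubgraph}, two distinct vertices $H_v$ and $H_{v'}$ are adjacent in $\MS(\Mb(G,\Hc))$ exactly when there exist $h_v\in H_v^*$ and $h_{v'}\in H_{v'}^*$ with $((h_v),(h_{v'}))\in\Db(G,\Hc)$. This is verbatim the condition characterised in Lemma \ref{lem:1}.\ref{lem:1c}, which equates it with $\{v,v'\}\in E$. Because each $H_v$ is non-trivial, the punctured sets $H_v^*$ are non-empty, so the existential quantifier in Definition \ref{def:maxsubgraph} is not vacuous and the equivalence applies directly. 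As Lemma \ref{lem:1}.\ref{lem:1c} is stated as an ``if and only if'', both directions of edge preservation follow simultaneously, and combining the two steps shows that $\Phi$ is an isomorphism of simple undirected graphs.

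At this stage there is no serious obstacle left: everything rests on Lemmas \ref{lem:1} and \ref{lem:locmassub}. The only point deserving a moment of care is injectivity of $\Phi$ on vertices, that is, ruling out $H_v=H_{v'}$ for $v\neq v'$. The hard part, such as it is, lies in remembering to treat the $H_v$ as concrete subgroups of $\Mb(G,\Hc)$—distinguished by the disjointness of the free factors encoded in the alphabet via Equation \eqref{eq:S}—rather than merely as abstract isomorphism types, since a priori distinct vertices could carry isomorphic decorating groups. Once that subtlety is dispatched, the edge correspondence is immediate from Lemma \ref{lem:1}.\ref{lem:1c}, completing the proof.
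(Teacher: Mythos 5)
Your proposal is correct and follows essentially the same route as the paper: Lemma \ref{lem:locmassub} identifies the vertex set of $\MS\big(\Mb(G,\Hc)\big)$ with $\Hc$, and Lemma \ref{lem:1}.\ref{lem:1c} gives the edge correspondence. Your extra attention to injectivity of $v\mapsto H_v$ (which also follows from the uniqueness clause in Lemma \ref{lem:locmassub}) is a welcome detail the paper leaves implicit, but it does not change the argument.
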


\begin{proof}
By Lemma \ref{lem:locmassub}, the maximal locally finite subgroups of $\Mb(G,\Hc)$ are exactly the elements of $\Hc$. Moreover, thanks to Lemma \ref{lem:1}.\ref{lem:1c}, $H_v$ and $H_{v'}$ are adjacent in $\MS\big(\Mb(G,\Hc)\big)$ if and only $\{v,v'\}\in E$. Hence the application which sends a vertex $v\in V$ to $H_v$ defines an isomorphism of graphs between the graph $G$ and the graph $\MS\big(\Mb(G,\Hc)\big)$.
\end{proof}

Notice that by also considering the maximal locally finite subgroups themselves, one recovers all the data, i.e., the graph and the decorating groups. Theorem \ref{thm:maximalsub} lays the foundations to start the study of Problem \ref{problem:proposed}: as the isomorphism type of $\Mb(G,\Hc)$ in $\Part$ determines the isomorphism type of $G$ in $\Graphs$, the graph theoretical properties of $G$ should be reflected on the algebraic properties of the partial group $\Mb(G,\Hc)$ and vice versa.

\section{Declarations}

The authors have no conflicts of interest to declare that are relevant to the content of this article.

\bibliographystyle{abbrv}
\bibliography{biblio} 

\end{document}